\documentclass[a4paper,11pt]{article}
\usepackage{amsmath,amssymb,amsthm,mathtools,mathrsfs}
\usepackage{cite}
\usepackage{microtype}
\usepackage{enumitem}
\usepackage[colorlinks=true,linkcolor=blue,citecolor=blue,urlcolor=blue]{hyperref}
\usepackage[capitalize,noabbrev]{cleveref}
\allowdisplaybreaks

\hypersetup{
  pdftitle={Heat Codimension and Scalar Curvature},
  pdfauthor={Bang-Xian Han},
  pdfsubject={Pointed Nash entropy, synthetic scalar curvature, and macroscopic dimension},
  pdfkeywords={scalar curvature, heat codimension, heat kernel, pointed Nash entropy, macroscopic dimension}
}

\numberwithin{equation}{section}

\newtheorem{theorem}{Theorem}[section]
\newtheorem{proposition}[theorem]{Proposition}
\newtheorem{lemma}[theorem]{Lemma}
\newtheorem{corollary}[theorem]{Corollary}
\theoremstyle{definition}
\newtheorem{definition}[theorem]{Definition}
\theoremstyle{remark}
\newtheorem{remark}[theorem]{Remark}
\crefalias{definition}{definition}
\crefalias{proposition}{proposition}
\crefalias{lemma}{lemma}
\crefalias{corollary}{corollary}
\crefname{definition}{Definition}{Definitions}
\crefname{proposition}{Proposition}{Propositions}
\crefname{lemma}{Lemma}{Lemmas}
\crefname{corollary}{Corollary}{Corollaries}

\newcommand{\Ric}{\mathrm{Ric}}
\newcommand{\Scal}{\mathrm{Scal}}
\newcommand{\Vol}{\mathrm{Vol}}
\newcommand{\tr}{\mathrm{tr}}
\newcommand{\Hess}{\mathrm{Hess}}
\renewcommand{\d}{\,\mathrm d}
\newcommand{\Rk}{\mathsf{R}}
\newcommand{\GF}{\mathsf{G}^{\mathrm F}}
\newcommand{\DF}{\mathsf{D}^{\mathrm F}}
\newcommand{\E}{\mathsf{E}}
\newcommand{\Q}{\mathsf{Q}}
\newcommand{\R}{\mathbb R}

\title{\LARGE\bfseries
Heat Codimension and Scalar Curvature}
\author{Bang-Xian Han\thanks{School of Mathematics, Shandong University,
Jinan 250100, China. Email: hanbx@sdu.edu.cn}}
\date{}

\begin{document}
\maketitle

\begin{abstract}
Recent proofs of Gromov's codimension-two volume-growth conjecture suggest a
heat-entropy formulation of scalar curvature.  We propose a synthetic
scalar lower bound on metric measure Dirichlet spaces with a  heat
codimension parameter.  The condition recovers scalar curvature in the smooth
setting, is stable under tensorzation and entropy--heat convergence, and gives
dimension bounds for blow-downs.
\end{abstract}

\textbf{Keywords}: synthetic scalar curvature, heat codimension, heat
kernel,  Nash entropy, metric measure Dirichlet space

\textbf{MSC 2020}: Primary 53C21, 58J35; Secondary 53C23, 28A75

\section{Introduction}

Gromov asked whether the volume-growth estimate that he stated under
nonnegative sectional curvature remains valid under nonnegative Ricci
curvature: if a complete noncompact $N$-manifold satisfies
$\Ric\ge0$ and $\Scal\ge\sigma>0$, must
\[
 \Vol B_R(x)\le C(N)\sigma^{-1}R^{N-2}\qquad(R>0)?
\]
See \cite[p.~114, (5') and \S2.A(b)]{GromovLarge}.  This question has
recently been answered affirmatively by three different methods
\cite{HeatKernel2026,KongZhu2026,Antonelli2026}; the last reference treats
positive intermediate curvature as well.  Kong--Zhu also prove a
codimension-one estimate starting from a uniform deficit in the volumes of
unit balls \cite{KongZhu2026}.  That assumption is macroscopic and is not part
of the scalar condition considered here.

We formulate a heat-entropy condition suggested by the codimension-two
estimate.  As in the curvature-dimension theory
\cite{LottVillani,SturmI,SturmII}, curvature is expressed by finite-scale
concavity; here the curves lie in the heat-time plane rather than in the base
space or in Wasserstein space.  Other weak notions of scalar lower bounds
include distributional scalar curvature \cite{LeeLeFloch}, Ricci-flow
regularization for $C^0$ metrics \cite{BurkhardtGuim}, and Gromov's
macroscopic scalar curvature based on finite-radius ball volumes
\cite{GromovFour,Sabourau}.

The two recent large-time estimates used below are \cite[Theorem~3.1]{HeatKernel2026}
for positive scalar curvature and \cite[Theorem~3.1]{Koirala2026} for
positive intermediate curvature.   In the
noncompact smooth setting we also use the positive-time heat-kernel identities
and regularity established in \cite[Sections~2.4--2.5 and Appendix~B]{HeatKernel2026}.

Let $(X,\mathsf d,\mathfrak m,\mathcal E)$ be a metric measure Dirichlet
space with heat semigroup $P_t$, heat kernel $p_t$, and
$\mu_{t,x}:=p_t(x,\cdot)\mathfrak m$.  For $N>0$ put
\begin{align*}
 \operatorname{Ent}_{\mathfrak m}(\mu_{t,x})
 &:=\int_X p_t(x,y)\log p_t(x,y)\,\d\mathfrak m(y),\\
 \mathcal N_x^{(N)}(t)
 &:=-\operatorname{Ent}_{\mathfrak m}(\mu_{t,x})
   -\frac N2\log(4\pi t)-\frac N2,\\
 \mathbf N_x^{(N)}(t,s)
 &:=P_s\bigl(\mathcal N_\bullet^{(N)}(t)\bigr)(x).
\end{align*}
The second heat time is needed because, in the smooth case, differentiation
along $t+s=T$ produces the heat average $P_s\Q_t$ of the scalar source; see
Proposition~\ref{prop:smooth-identity}.

The logarithmic correction is fixed by Euclidean space.  If $\mathbb R^m$ is
normalized by $N\ge m$, then
$\mathcal N_x^{(N)}(t)=\mathrm{const}-(N-m)\log t/2$, so in inverse time
$r=t^{-1}$ its second derivative is $-(N-m)/(2r^2)$.

Set $\mathscr T=(0,\infty)^2$ and $\pi(t,s)=t+s$.  On $\mathscr T$ consider
the extended metric
\[
 \mathsf d_{\mathrm{it}}\bigl((t,s),(\bar t,\bar s)\bigr)
 =\begin{cases}
 |t^{-1}-\bar t^{-1}|,&t+s=\bar t+\bar s,\\
 +\infty,&t+s\ne\bar t+\bar s.
 \end{cases}
\]
Its finite-distance components are $\Sigma_T:=\pi^{-1}(T)$, and
$(t,s)\mapsto t^{-1}$ is an isometry from $\Sigma_T$ onto
$(T^{-1},\infty)$.  Thus the finite-distance constant-speed geodesics are the
curves $\gamma_\theta=(t_\theta,s_\theta)$ with $t_\theta+s_\theta$ constant
and
$t_\theta^{-1}=(1-\theta)t_0^{-1}+\theta t_1^{-1}$.  We call these
\emph{inverse-time geodesics}.  Write
$\mathscr T^+:=\{(t,s):t\le s\}$ and
$\mathscr T_S^+:=\{(t,s):S\le t\le s\}$.

\begin{definition}[Heat scalar lower bound]\label{def:HScal}
Let $\sigma\in\mathbb R$ and $N>0$.  We say that $X$ has \emph{heat scalar
curvature at least $\sigma$ in dimension $N$} if there is a scale $\tau>0$,
independent of the pole $x$, such that
$\mathbf N_x^{(N)}(t,s)+(\sigma/2)t$ is concave along every inverse-time
geodesic contained in $(0,\tau)^2$, for every $x\in X$.
\end{definition}

For $\sigma=0$ this is exactly local inverse-time concavity of
$\mathbf N_x^{(N)}$.

\begin{definition}[Heat scalar--codimension condition]
Let $\sigma\in\mathbb R$, $N>0$, and $q\in[0,N]$.  We say that
$X\in\mathrm{HSCod}(\sigma,N,q)$ if it has heat scalar curvature at least
$\sigma$ in dimension $N$ and, when $q>0$, there is a scale $S>0$,
independent of the pole, such that
$\mathbf N_x^{(N)}(t,s)+(q/2)\log t$ is concave along every inverse-time
geodesic contained in $\mathscr T_S^+$, for every $x\in X$.  For $q=0$
there is no large-time condition and this is Definition~\ref{def:HScal}.
\end{definition}

\begin{remark}[Heat codimension]\label{rem:heat-codimension}
The name refers to the large-time heat profile, not to a local codimension of
$X$.  With normalization dimension $N$, Euclidean space $\mathbb R^m$ has
sharp coefficient $N-m$; see Proposition~\ref{prop:euclidean}.  More
generally, if $K^k$ is closed and connected, heat-kernel factorization and
\[
 p_t^K(x,y)=\Vol(K)^{-1}+\sum_{j\ge1}e^{-\lambda_jt}\phi_j(x)\phi_j(y)
\]
give a sharp large-time coefficient $k$ on
$K^k\times\mathbb R^{N-k}$.  Thus
\[
\begin{array}{c|c}
\text{space} & \text{sharp large-time coefficient}\\ \hline
\mathbb R^N & 0\\
K^k\times\mathbb R^{N-k} & k\\
K^N\ \text{closed} & N.
\end{array}
\]
The estimate $q<2$ in \cref{thm:positive-scalar} is therefore a uniform
lower bound; it need not be sharp for a fixed space.  Relating the sharp
coefficient to the geometry of blow-downs is open.
\end{remark}

\begin{theorem}[Smooth scalar curvature]\label{thm:smooth-identification}
Let $(M^N,g)$ be closed.  If $M$ has heat scalar curvature at least $\sigma$,
then $\Scal_g\ge\sigma$.  Conversely, if $\min_M\Scal_g>\sigma$, then $M$
has heat scalar curvature at least $\sigma$.  Hence
\[
 \min_M\Scal_g
 =\sup\{\sigma:M\text{ has heat scalar curvature at least }\sigma\}.
\]
\end{theorem}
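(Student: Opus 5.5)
The plan is to reduce both directions to the smooth second-variation identity of Proposition~\ref{prop:smooth-identity}, applied along an inverse-time geodesic. Fix $T>0$ and a geodesic $\gamma_\theta=(t_\theta,T-t_\theta)$ in $\Sigma_T\cap(0,\tau)^2$, and use $r=t^{-1}$ as the affine parameter. Since $\frac{\d}{\d r}=-t^2\frac{\d}{\d t}$, the correction term satisfies
\[
\frac{\d^2}{\d r^2}\Bigl(\frac{\sigma}{2}t\Bigr)=\sigma t^3 .
\]
Differentiating $\mathbf N_x^{(N)}(t,T-t)=P_{T-t}\bigl(\mathcal N^{(N)}_\bullet(t)\bigr)(x)$ in $t$ gives
\[
\frac{\d}{\d t}\,\mathbf N_x^{(N)}(t,T-t)=P_{T-t}\bigl(\partial_t\mathcal N^{(N)}_\bullet(t)-\Delta\,\mathcal N^{(N)}_\bullet(t)\bigr)(x),
\]
because $\partial_s P_s=\Delta P_s$. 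I expect Proposition~\ref{prop:smooth-identity} to package the second $r$-derivative in the form
\[
\frac{\d^2}{\d r^2}\,\mathbf N_x^{(N)}(t,T-t)=-t^3\,P_{T-t}\Q_t(x).
\]
Here $\Q_t$ is a heat-type scalar source: $\Scal_g$ plus a Perelman-type square term $t\int|\Ric+\Hess f-g/(2t)|^2$ with a favourable sign, or more generally $\Scal_g$ plus a remainder. With this form, concavity of $\mathbf N_x^{(N)}+(\sigma/2)t$ along $\gamma$ is equivalent to $P_{T-t}\Q_t(x)\ge\sigma$ for all $t$ on the segment.

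\emph{Converse direction.} On a closed manifold the short-time heat-kernel expansion
\[
p_t(x,y)\sim(4\pi t)^{-N/2}e^{-\mathsf d^2/4t}\bigl(u_0+tu_1+\cdots\bigr),\qquad u_1(x,x)=\Scal(x)/6,
\]
holds uniformly in $x,y$ together with derivatives. From it I would show $\Q_t(y)\ge\Scal_g(y)-Ct$ uniformly in $y$ for $t\le1$; if the identity contains a nonnegative square term, that term is simply discarded. If $\min\Scal_g>\sigma$, choose $\tau$ with $\min\Scal_g-C\tau>\sigma$. Since $P_s$ is Markov, it preserves pointwise lower bounds, so $P_s\Q_t\ge\min\Scal_g-Ct>\sigma$ on $(0,\tau)^2$. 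This gives concavity with a scale $\tau$ independent of $x$.

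\emph{Forward direction.} Assume concavity holds on $(0,\tau)^2$. Then $P_s\Q_t(x)\ge\sigma$ for all $(t,s)\in(0,\tau)^2$; to get this pointwise from concavity along every geodesic, take geodesics of arbitrarily small length through the point. Now let $s\to0$ with $t$ fixed, so that $P_s\Q_t(x)\to\Q_t(x)$ by continuity of $\Q_t$. Then let $t\to0$. Using $\Q_t(x)\to\Scal_g(x)$, which follows from the same expansion (now needing the two-sided estimate $|\Q_t-\Scal_g|\le Ct$), we get $\Scal_g(x)\ge\sigma$. The equality for $\min_M\Scal_g$ follows by combining the two directions: every admissible $\sigma$ is at most $\min\Scal_g$, and every $\sigma<\min\Scal_g$ is admissible.

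\emph{Main obstacle.} The hard step is the uniform small-time asymptotics $\Q_t=\Scal_g+O(t)$. This requires expanding $\operatorname{Ent}(\mu_{t,x})$, and the resulting expression (after applying $\Delta$ in the pole variable), to order $t$ with remainders uniform in $x$. The normalisation must also be checked carefully so that the coefficient of $t^3$ is exactly $\Scal$; this is consistent with the Euclidean computation $-(N-m)/(2r^2)$ and with the factor $\sigma/2$ in Definition~\ref{def:HScal}. I would first try to obtain this expansion from the Minakshisundaram--Pleijel parametrix, differentiating it term by term in $t$ and in the pole.
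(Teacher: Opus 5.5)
Your proposal is correct and follows the paper's proof. Both directions are reduced, via Proposition~\ref{prop:smooth-identity}, to the source inequality $P_s\Q_t\ge 2\sigma t$ on a small heat square, and then closed with the uniform short-time expansion $\Q_t=2t\Scal+O(t^2)$. Your $\Q_t$ is the paper's $\Q_t/(2t)$, so your identity $-t^3P_{T-t}\Q_t$ is exactly the paper's $-\frac{1}{2r^2}P_{T-r^{-1}}\Q_{r^{-1}}$. The remaining differences are only in presentation: in the converse you pass a uniform pointwise lower bound through the Markov operator $P_s$, and in the forward direction you let $s\downarrow0$ and then $t\downarrow0$, whereas the paper uses the joint uniform expansion of $P_s\Q_t/(2t)$.
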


\begin{theorem}[Positive scalar curvature]\label{thm:positive-scalar}
Let $(M^N,g)$ be complete with $\Ric_g\ge0$ and
$\Scal_g\ge\sigma_0>0$.  Then $M$ has heat scalar curvature at least every
$\sigma<\sigma_0$.  If, in addition, $M$ is connected and noncompact and
$N\ge3$, then
$M\in\mathrm{HSCod}(\sigma,N,q)$ for every
$\sigma<\sigma_0$ and $0<q<2$.
\end{theorem}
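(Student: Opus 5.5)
We work in inverse time $r=t^{-1}$ along a fixed fibre $\Sigma_T$. There $s=T-t$, so $\partial_r=-t^2\partial_t$ on functions of $(t,T-t)$. The whole question is the sign of the second $r$-derivative of $F(t):=\mathbf N_x^{(N)}(t,T-t)+(\sigma/2)t$ for the local condition, or of $G(t):=\mathbf N_x^{(N)}(t,T-t)+(q/2)\log t$ for the large-time one.

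We use the smooth identity of Proposition~\ref{prop:smooth-identity}, which holds on complete manifolds once the positive-time heat-kernel identities and regularity of \cite[Sections~2.4--2.5, Appendix~B]{HeatKernel2026} are available. It expresses the derivative along $t+s=T$ through the heat average $P_s\Q_t$ of the scalar source. Concretely, $\frac{d}{dt}\mathcal N_x(t)$ is given by a Perelman-type integral $\int(|\nabla\log p_t|^2-\frac N{2t})\,d\mu_{t,x}$ up to sign. Differentiating $P_{T-t}\mathcal N(t)$ along the fibre commutes the $s$-derivative into $-\Delta$, and the Bochner/Li--Yau terms then produce
\[
 \partial_r^2 F=-t^{4}\Bigl(P_s\bigl[\,2\textstyle\int|\Hess\log p_t+\tfrac{g}{2t}|^2d\mu_{t,\bullet}+\Q_t\bigr]-\tfrac{\sigma}{2}\cdot(\text{normalizing factor})\Bigr),
\]
with $\Q_t$ the heat-weighted scalar term, asymptotic to $\int \Scal\, d\mu_{t,\bullet}$ modulo $\Ric$-dependent terms. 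I will take the exact form from Proposition~\ref{prop:smooth-identity}.

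\textbf{Local part.} The Hessian square is nonnegative. Under $\Ric\ge0$ and $\Scal\ge\sigma_0$, the source $P_s\Q_t$ is at least $\sigma_0$ minus an error. That error is controlled uniformly in $x$ for $t,s<\tau$ by the short-time heat-kernel estimates: Li--Yau and Gaussian bounds from $\Ric\ge0$ give uniform control of $\int|\nabla\log p_t|^2d\mu$ and of the Ricci correction. Choosing $\tau$ so that this error is below $\sigma_0-\sigma$ gives $\partial_r^2F\le0$ on $(0,\tau)^2$. This yields heat scalar curvature at least every $\sigma<\sigma_0$. No compactness is needed, because $\Ric\ge0$ makes every constant pole-independent.

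\textbf{Large-time part.} For $t\le s$ and $t\ge S$ we must show $\partial_r^2\mathbf N+(q/2)\partial_r^2\log t\le0$. Since $\partial_r^2\log t=\partial_r^2(-\log r)=r^{-2}=t^2$, this amounts to $\partial_r^2\mathbf N\le -\frac q2 t^2$, that is, to a lower bound on the source of the form $t^{4}(\cdots)\ge\frac q2 t^2$. This is exactly the large-time estimate \cite[Theorem~3.1]{HeatKernel2026}. Under $\Ric\ge0$, $\Scal\ge\sigma_0$, noncompact, connected and $N\ge3$, it gives an asymptotic bound on the Nash entropy second variation corresponding to an effective heat codimension $2$: roughly, $t\,\frac{d}{dt}\bigl(t\,\frac{d}{dt}\mathcal N\bigr)\le -(1-\epsilon(t))$ with $\epsilon(t)\to0$ uniformly in $x$. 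Translating this into the inverse-time variable and applying $P_s$ (positivity preserving, with $P_s1=1$ by stochastic completeness under $\Ric\ge0$) gives the bound for every $q<2$ once $S$ is large. The restriction $t\le s$ is used so that the extra term from differentiating in $s$, namely $\Delta\mathcal N$ averaged by $P_s$, is dominated: for $s\ge t$ the heat average smooths it at scale $\sqrt s$, and gradient bounds make it $o(t^{-2})$.

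\textbf{Main obstacle.} The hard step is the last one. We need the large-time estimate in a \emph{uniform in $x$, second-derivative} form, and we must show that the commutator term from the $s$-dependence is $o(t^{-2})$ on $\mathscr T_S^+$. My first attempt will be to write $\partial_r^2\mathbf N$ entirely via Proposition~\ref{prop:smooth-identity} as $P_s$ applied to a pointwise quantity in $t$. Then only the pointwise large-time inequality of \cite[Theorem~3.1]{HeatKernel2026} is needed, and monotonicity of $P_s$ finishes the argument.
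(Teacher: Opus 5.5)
Your small-time step has a genuine gap. By Proposition~\ref{prop:smooth-identity}, $\partial_r^2\mathbf N_x^{(N)}=-\tfrac{t^2}{2}P_s\Q_t(x)$ exactly. So you must show $P_s\Q_t(x)\ge2\sigma t$ for all $x\in M$ and $0<t,s<\tau$; the source is of size $2t\Scal$, not $\Scal$. You propose to get this from the short-time expansion of $\Q_t$, with the error ``controlled uniformly in $x$'' by Li--Yau and Gaussian bounds, because $\Ric\ge0$ makes all constants pole-independent. That fails for the constants you actually need. $\Ric\ge0$ makes the volume-doubling, Gaussian and Li--Yau constants uniform. But the remainder in $\Q_t(x)=2t\Scal(x)+O_x(t^2)$, and the time below which this expansion is accurate, depend on the full curvature tensor and its derivatives near $x$. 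These are unbounded in general on a complete noncompact manifold with $\Ric\ge0$. Li--Yau gives only the one-sided bound $\tr\GF_t\le n$; it says nothing about how close $\GF_t$ is to $g$ at order $t$, which an asymptotic argument would need. So your argument yields no pole-independent witness scale $\tau$.

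The missing idea is that no asymptotics are needed. There is an exact pointwise floor $\Q_t(x)\ge\Psi(\sigma_0t)$ for all $x$ and $t$ (Lemma~\ref{lem:scalar-source-floor}). To get it, discard the Hessian-variance term in \eqref{eq:Q} and use $0\le\GF_t\le g$, which follows from the reverse local Poincar\'e inequality under $\Ric\ge0$. If $\lambda$ is the smallest eigenvalue of $\GF_{t,x}$, then $\langle\Ric,\GF_t\rangle\ge\lambda\Scal\ge\lambda\sigma_0$ and $|g-\GF_t|^2\ge(1-\lambda)^2$. Minimizing $(1-\lambda)^2+2\sigma_0t\lambda$ over $\lambda\in[0,1]$ gives $\Psi(\sigma_0t)$. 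This is a trade-off: wherever $\GF_t$ degenerates, the defect term is of order one and compensates. Since $P_s$ is Markov, $P_s\Q_t/(2t)\ge\sigma_0-\sigma_0^2t/2$ for $t\le\sigma_0^{-1}$. This is at least $\sigma$ below an explicit $\tau(\sigma,\sigma_0)$, uniformly in $x$ and $s$.

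Your large-time reduction to $P_s\Q_t\ge q$ on $\mathscr T_S^+$ via \cite[Theorem~3.1]{HeatKernel2026} is the paper's route, but you misdescribe the input in two ways.
\begin{enumerate}[label=\textup{(\alph*)}]
\item There is no commutator term $P_s\Delta\mathcal N$ to dominate. The $s$-derivative cancels exactly against $(\partial_t-\Delta)\mathcal N_\bullet^{(N)}(t)=-\E_t/t^2$.
\item The cited estimate is not a uniform pointwise bound $\Q_t\ge2-\epsilon(t)$ to which you then apply $P_s$. It is already heat-propagated, $P_s\Q_t(x)\ge2-C_N\bigl((\sigma_0t)^{-1}+(\sigma_0s)^{-1}\bigr)^{1/3}$, and its error blows up as $s\to0$.
\end{enumerate}
That blow-up, not smoothing of a commutator, is why the sector $s\ge t\ge S$ is needed. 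Choosing $S$ with $C_N(\sigma_0S)^{-1/3}\le2-q$ (after enlarging $C_N$) then finishes the argument.
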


The definitions are monotone in $\sigma$ and $q$, have the scalar-curvature
scaling law, are unchanged by constant renormalization of the reference
measure, and satisfy the product rules stated in Proposition~\ref{prop:basic-properties}.
They are also stable under entropy--heat convergence when the witness scales
are uniform; see \cref{thm:stability}.

\begin{theorem}[Blow-down comparison]\label{thm:blowdown-dimension}
Let $X\in\mathrm{HSCod}(\sigma,N,q)$ with $q>0$, and suppose that
$(X,a_i\mathsf d,c_i\mathfrak m,x_i)$ converges in entropy--heat sense to
$(X_\infty,\mathsf d_\infty,\mathfrak m_\infty,x_\infty)$ for some
$a_i\downarrow0$.  Then $X_\infty$ satisfies the exact forward
codimension-$q$ comparison.  If a further pointed rescaling of $X_\infty$
converges in entropy--heat sense to
$(\mathbb R^m,\mathsf d_{\mathrm E},c\mathcal L^m,0)$, then $m\le N-q$.
\end{theorem}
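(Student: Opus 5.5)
The plan has three stages: pass the large-time concavity to the limit, isolate a one-variable comparison, and then test that comparison on the Euclidean model of Proposition~\ref{prop:euclidean}.

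\emph{Step 1: rescaling.} First I record how the objects transform under $(\mathsf d,\mathfrak m)\mapsto(a\mathsf d,c\mathfrak m)$. The heat kernel rescales as $p^{a,c}_t=c^{-1}p_{t/a^2}$. Hence
\[
\mathcal N^{(N)}_{x}(t)\big|_{a,c}=\mathcal N^{(N)}_x(t/a^2)+\log c+\tfrac N2\log(a^{-2}),
\]
so $\mathbf N$ changes only by an additive constant and a reparametrization $(t,s)\mapsto(t/a^2,s/a^2)$. This reparametrization maps inverse-time geodesics to inverse-time geodesics.

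The large-time concavity of $\mathbf N+(q/2)\log t$ on $\mathscr T^+_S$ is therefore equivalent to concavity on $\mathscr T^+_{a^2S}$ for the rescaled space; the term $(q/2)\log(t/a^2)$ differs from $(q/2)\log t$ by a constant. Since $a_i\downarrow0$, the witness scale $a_i^2S\to0$. The small-time condition with $\sigma$ scales to the region $(0,a_i^2\tau)^2$, which shrinks, so it disappears in the limit; this is why only the codimension condition survives.

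\emph{Step 2: passage to the limit.} Entropy--heat convergence gives convergence of $\mathbf N_{x_i}(t,s)$ to $\mathbf N_{x_\infty}(t,s)$ at each fixed $(t,s)$, as used in \cref{thm:stability}. Pointwise limits of concave functions along each fixed inverse-time geodesic are concave. I therefore conclude that $\mathbf N^{(N)}_{x_\infty}+(q/2)\log t$ is concave along every inverse-time geodesic in $\{0<t\le s\}$. This global statement, valid at all scales, is what I take ``exact forward codimension-$q$ comparison'' to mean.

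If convergence is only at base points, I will apply the argument to poles $y_i\to y$ as well, or restrict to $x_\infty$. Only $x_\infty$ is needed for the second claim.

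\emph{Step 3: second blow-down.} The exact comparison is scale invariant by Step~1, since every $\mathscr T^+_{S'}$ with $S'>0$ is contained in $\mathscr T^+_0$. It is therefore preserved by any further pointed rescaling that converges in the entropy--heat sense, again via Step~2. It thus holds on $(\mathbb R^m,\mathsf d_{\mathrm E},c\mathcal L^m,0)$. There Proposition~\ref{prop:euclidean} gives
\[
\mathbf N(t,s)=\mathrm{const}-\tfrac{N-m}2\log t,
\]
so $\mathbf N+(q/2)\log t=\mathrm{const}-\tfrac{N-m-q}{2}\log t$. In $r=t^{-1}$ this equals $\mathrm{const}+\tfrac{N-m-q}{2}\log r$, whose second derivative is $-(N-m-q)/(2r^2)$. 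Concavity on an interval of $r$ forces $N-m-q\ge0$, that is, $m\le N-q$.

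\emph{Main obstacle.} I expect the hardest step to be Step~2, namely justifying convergence of the doubly averaged quantity $P_s(\mathcal N_\bullet(t))(x)$. It needs locally uniform convergence of $y\mapsto\mathcal N_y(t)$ together with heat-kernel convergence. My first attempt is to take this directly from the definition of entropy--heat convergence in the form used by \cref{thm:stability}. Failing that, I would use the limit-of-concave-functions argument restricted to the countable dense set of geodesics with rational endpoints, and then extend by continuity of concave functions.
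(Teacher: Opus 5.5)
Your proposal is correct and follows the paper's proof. The rescaling carries the macroscopic witness sector to $\mathscr T^+_{a_i^2S_*}$, which eventually contains any fixed inverse-time geodesic in $\mathscr T^+$, so the stability argument of Theorem~\ref{thm:stability} gives the exact forward comparison on $X_\infty$, and your Step~3 simply unpacks Proposition~\ref{prop:euclidean}(ii). One aside is wrong: only $x_\infty$ is not enough when the further rescaling is based at other points (e.g.\ tangent cones). This costs nothing, because entropy--heat convergence is defined for every convergent pole sequence, so your all-poles version applies directly and no fallback is needed.
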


For a finite-dimensional $\mathrm{RCD}$ space, the \emph{essential
dimension} is its almost-everywhere Euclidean regular dimension
\cite{MondinoNaber,BrueSemola}.  Combining \cref{thm:positive-scalar} with
finite-dimensional $\mathrm{RCD}$ compactness and stability
\cite{SturmII,EKS} and the theorem above gives the
codimension-two dimension bound for normalized blow-downs of complete manifolds with
$\Ric\ge0$ and a uniform positive scalar lower bound; see
Corollary~\ref{cor:smooth-blowdown}.  The codimension-one theorem of \cite{KongZhu2026} starts instead from a
unit-ball volume deficit and is not covered by the scalar condition here.

\section{Smooth heat identities}

Let $(M^n,g)$ be a complete stochastically complete Riemannian manifold with
minimal heat kernel $p_t(x,y)$.  Denote the heat semigroup on functions by
$P_t$ and its dual heat flow on measures by $\mathsf H_t=(P_t)^*$.  Thus $\mu_{t,x}:=\mathsf H_t\delta_x$ has density
$\d\mu_{t,x}(y)=p_t(x,y)\,\d\Vol_g(y)$.

\subsection{The normalized Fisher heat tensor}

For $v\in T_xM$ define the score
$\ell_{t,x,v}(y):=\mathrm d_x\log p_t(x,y)[v]$.
Stochastic completeness and differentiation in the pole variable give
\[
 \int_M\ell_{t,x,v}\,\d\mu_{t,x}=0,
 \qquad
 \mathrm d(P_t f)_x(v)=\int_M f\ell_{t,x,v}\,\d\mu_{t,x}.
\]

\begin{definition}[Normalized Fisher heat tensor]
The normalized Fisher pullback tensor is
\[
 \GF_{t,x}(v,w)
 :=2t\int_M\ell_{t,x,v}\ell_{t,x,w}\,\d\mu_{t,x}.
\]
Its defect is $\DF_t:=g-\GF_t$.
\end{definition}

On $\R^n$ one has $\GF_t=g$ at every positive time.  Under
$\Ric\ge0$, the reverse local Poincar\'e inequality
\[
 P_t(f^2)-(P_t f)^2\ge2t|\nabla P_t f|^2.
\]
holds, and duality in $L^2(\mu_{t,x})$ yields $0\le\GF_t\le g$; see
\cite[Theorem~4.7.2(iv)]{BGL}.

\subsection{The centered source and its models}

\subsubsection{The trace-level source}

Write $h=-\log p_t$.  Associated with the Fisher tensor is the
heat-dimension defect
$\E_t(x):=\frac t2\bigl(n-\tr_g\GF_{t,x}\bigr)$.
The corresponding centered source is
\begin{align}
 \Q_t(x)
 :={}&4t^2\int_M\left|
 \Hess_xh(x,y,t)-\frac{\GF_{t,x}}{2t}
 \right|^2\d\mu_{t,x}(y)\notag\\
 &+|g_x-\GF_{t,x}|^2
 +2t\langle\Ric_x,\GF_{t,x}\rangle.
 \label{eq:Q}
\end{align}
The Hessian--Ricci production term in \eqref{eq:Q} is the one appearing in
the linear heat entropy formula \cite[Theorem~0.1]{Ni}.  The pole-variable
centered identity and the positive-time regularity used in the complete
noncompact case are established in \cite[Sections~2.4--2.5 and
Appendix~B]{HeatKernel2026}.  Standard heat-kernel asymptotics give
\[
 \Q_t(x)=2t\Scal(x)+O_x(t^2),\qquad t\downarrow0.
\]
See \cite[Chapter~2]{BGV} for the local heat-kernel expansion and
\cite[Section~2]{HeatKernel2026} for the normalization used here.

Let $a_1(x)\le\cdots\le a_n(x)$ be the eigenvalues of
$\Ric_x^\sharp$, and set $\Rk_k(x):=a_1(x)+\cdots+a_k(x)$ for
$1\le k\le n$.  Thus $\Rk_n=\Scal$.  We also use
\[
 \Psi(u):=1-(1-u)_+^2
 =\begin{cases}2u-u^2,&0\le u\le1,\\1,&u\ge1.\end{cases}
\]

\begin{lemma}[Scalar source floor]\label{lem:scalar-source-floor}
Let $(M^n,g)$ be complete with $\Ric\ge0$ and $\Scal\ge\sigma>0$.
Then for every $x\in M$ and $t>0$,
\begin{equation}\label{eq:scalar-source-floor}
 \Q_t(x)\ge\Psi(\sigma t).
\end{equation}
\end{lemma}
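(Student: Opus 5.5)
The plan is to discard the Hessian term in \eqref{eq:Q} and bound what remains by a pointwise linear-algebra minimization, using only the two-sided Fisher bound $0\le\GF_{t,x}\le g$ and the curvature hypotheses. Since the first term of \eqref{eq:Q} is a nonnegative integral, it suffices to show
\[
 |g_x-\GF_{t,x}|^2+2t\langle\Ric_x,\GF_{t,x}\rangle\ge\Psi(\sigma t).
\]
Under $\Ric\ge0$ the reverse local Poincar\'e inequality gives $0\le\GF_{t,x}\le g_x$ (as recalled above via \cite[Theorem~4.7.2(iv)]{BGL}).

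First, fix $x,t$ and choose a $g_x$-orthonormal basis $e_1,\dots,e_n$ diagonalizing $\GF_{t,x}$, with eigenvalues $\gamma_i\in[0,1]$. Put $r_i:=\Ric_x(e_i,e_i)$. Then $r_i\ge0$ by $\Ric\ge0$, and $\sum_i r_i=\Scal(x)\ge\sigma$. In this basis the two terms are
\[
 |g-\GF|^2=\sum_i(1-\gamma_i)^2,\qquad \langle\Ric,\GF\rangle=\sum_i\gamma_i r_i.
\]
The off-diagonal entries of $\Ric$ play no role because $\GF$ is diagonal. The left-hand side is therefore $\sum_i f(\gamma_i,tr_i)$, where $f(\gamma,u)=(1-\gamma)^2+2u\gamma$.

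Second, minimize each term over $\gamma\in[0,1]$ with $u=tr_i\ge0$ fixed. The unconstrained minimizer is $\gamma=1-u$. If $u\le1$ this lies in $[0,1]$ and the minimum is $u^2+2u(1-u)=2u-u^2=\Psi(u)$. If $u>1$ the function is increasing on $[0,1]$, so the minimum is $f(0,u)=1=\Psi(u)$. Hence the left-hand side is at least $\sum_i\Psi(tr_i)$.

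Third, $\Psi$ is concave and nondecreasing on $[0,\infty)$ with $\Psi(0)=0$. It is therefore subadditive, so
\[
 \sum_i\Psi(tr_i)\ge\Psi\Bigl(t\sum_i r_i\Bigr)\ge\Psi(\sigma t),
\]
where the last step uses monotonicity and $\sum_i r_i\ge\sigma$. This gives \eqref{eq:scalar-source-floor}.

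The only nontrivial input is the bound $\GF\le g$, which needs $\Ric\ge0$. In the complete noncompact case one must also justify that $\GF_{t,x}$ and $\Q_t(x)$ are finite and well-defined; for this I would cite the positive-time regularity of \cite[Appendix~B]{HeatKernel2026}. Everything else is elementary.
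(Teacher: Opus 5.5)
Your proof is correct. It differs from the paper's only in how the matrix inequality is reduced to a one-variable minimization.

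The paper keeps just the smallest eigenvalue $\lambda$ of $G=(\GF_{t,x})^\sharp$. From $G\ge\lambda I$ and $\Ric\ge0$ it gets $\langle\Ric_x,\GF_{t,x}\rangle\ge\lambda\,\Scal(x)\ge\lambda\sigma$. From $|g-\GF_{t,x}|^2\ge(1-\lambda)^2$ it arrives at the single function $(1-\lambda)^2+2\sigma t\lambda$, whose minimum on $[0,1]$ is $\Psi(\sigma t)$. No property of $\Psi$ beyond that one minimization is used.

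You instead minimize direction by direction in a $\GF_{t,x}$-eigenbasis. You then recombine the pieces using the concavity (hence subadditivity) and monotonicity of $\Psi$. This costs one extra step but keeps more information. Your intermediate bound $\Q_t(x)\ge\sum_i\Psi\bigl(t\,\Ric_x(e_i,e_i)\bigr)$ can be made basis-free: Schur's theorem says the diagonal of $\Ric_x$ in any orthonormal basis is majorized by its eigenvalues $a_1(x),\dots,a_n(x)$, and Karamata's inequality for the concave $\Psi$ then gives $\Q_t(x)\ge\sum_i\Psi\bigl(t\,a_i(x)\bigr)$.

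That single estimate yields this lemma by subadditivity. It also yields the source bound $\Q_t\ge(n-k+1)\Psi(\sigma t/k)$ of Theorem~\ref{thm:partial-ricci}, since $a_k,\dots,a_n\ge\sigma/k$ and $\Psi\ge0$. The paper proves that bound separately with a spectral projection and partial eigenvalue sums.
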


\begin{proof}
Fix $x,t$ and set $A=\Ric_x^\sharp\ge0$ and
$G=(\GF_{t,x})^\sharp$.  By $0\le G\le I$, if $\lambda$ is the
smallest eigenvalue of $G$, then
\[
 \langle A,G\rangle\ge\lambda\,\tr A\ge\sigma\lambda,
 \qquad |I-G|^2\ge(1-\lambda)^2.
\]
Discarding the nonnegative Hessian-variance term in \eqref{eq:Q} gives
\[
 \Q_t(x)\ge(1-\lambda)^2+2\sigma t\lambda.
\]
Minimizing over $0\le\lambda\le1$ gives $\Psi(\sigma t)$ and proves \eqref{eq:scalar-source-floor}.
\end{proof}

\subsection{Curvature estimates}

\begin{theorem}[Partial Ricci curvature]\label{thm:partial-ricci}
Let $(M^n,g)$ be complete with $\Ric\ge0$.  Suppose that
$2\le k<n$ and $\Rk_k\ge\sigma>0$, and set $q=n-k+1$.  Then
\[
 \Q_t(x)\ge q\,\Psi\!\left(\frac{\sigma t}{k}\right),
 \qquad
 \Scal(x)\ge\frac{n\sigma}{k}.
\]
Consequently, for every $\sigma'<n\sigma/k$,
\[
 M\in\mathrm{HSCod}(\sigma',n,n-k+1),
\]
and the exact forward codimension-$q$ comparison already holds on
$\mathscr T_{k/\sigma}^+$.  At the scalar endpoint $k=n$, the pointwise estimate gives one logarithmic
unit, while the propagated scalar estimate below gives the endpoint two.
\end{theorem}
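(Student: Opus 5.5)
The plan has three parts: a pointwise eigenvalue estimate for $\Q_t$ modelled on \cref{lem:scalar-source-floor}, then the local scalar part of the condition, then the large-time codimension part.

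\textbf{Pointwise floor.} Fix $x,t$ and write $A=\Ric_x^\sharp\ge0$ with eigenvalues $a_1\le\cdots\le a_n$, and $G=(\GF_{t,x})^\sharp$ with eigenvalues $0\le\lambda_1\le\cdots\le\lambda_n\le1$ (using $0\le G\le I$ from \cite{BGL}). As in \cref{lem:scalar-source-floor}, I discard the Hessian-variance term in \eqref{eq:Q}, leaving
\[
 \Q_t(x)\ge\sum_j(1-\lambda_j)^2+2t\langle A,G\rangle .
\]
The key step is a lower bound for $\langle A,G\rangle$ through the partial sums $\Rk_k$.
\begin{itemize}
\item By von Neumann's trace inequality, $\langle A,G\rangle\ge\sum_j a_j\lambda_{n+1-j}$: the smallest Ricci eigenvalues are paired with the largest $\lambda$'s.
\item The concave-type inequality I expect to use is $\sum_j a_j\mu_j\ge(\sigma/k)\sum_{j\ge n-k+1}\ldots$. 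More concretely: since $\Rk_k\ge\sigma$ and the $a_j$ increase, every index $j\ge k$ satisfies $a_j\ge\Rk_k/k\ge\sigma/k$. Indeed $a_k$ is the largest of $a_1,\dots,a_k$, so $a_k\ge\Rk_k/k$.
\end{itemize}
Hence the $n-k+1$ eigenvalues $a_k,\dots,a_n$ are each at least $\sigma/k$. They are paired with $n-k+1$ eigenvalues of $G$, each of which is at least $\lambda_1$. Dropping the other terms, which is allowed since $A\ge0$, gives
\[
 \Q_t\ge\sum_{j=k}^n\bigl[(1-\lambda'_j)^2+2t(\sigma/k)\lambda'_j\bigr]
\]
for the paired eigenvalues $\lambda'_j\in[0,1]$. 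Minimising each bracket separately yields $q\,\Psi(\sigma t/k)$. The scalar bound is elementary: $\Scal=\Rk_k+\sum_{j>k}a_j\ge\sigma+(n-k)\sigma/k=n\sigma/k$.

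\textbf{Local scalar part.} Since $\Scal\ge n\sigma/k>\sigma'$ and $\Ric\ge0$, the first assertion of \cref{thm:positive-scalar} (or its proof, via the identity of Proposition~\ref{prop:smooth-identity}) gives heat scalar curvature at least $\sigma'$ in dimension $n$.

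\textbf{Large-time codimension part.} For $t\ge k/\sigma$ we have $\sigma t/k\ge1$, so $\Psi=1$ and $\Q_t\ge q$ everywhere, hence $P_s\Q_t\ge q$. I then feed this into Proposition~\ref{prop:smooth-identity}. I expect it to express the second inverse-time derivative of $\mathbf N_x^{(n)}$ along $t+s=T$ as $-(t^2/2)\,\cdot$ a combination of $P_s\Q_t$ terms, normalised as in Euclidean space so that a source of size $c$ produces the $\log$ coefficient $c/2$. Write $r=1/t$. Then $\frac{q}{2}\log t=-\frac q2\log r$ has second $r$-derivative $q/(2r^2)$, which the source bound $P_s\Q_t\ge q$ is expected to cancel exactly. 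That would give concavity of $\mathbf N_x^{(n)}+(q/2)\log t$ on $\mathscr T^+_{k/\sigma}$, i.e.\ the exact forward codimension-$q$ comparison there, and so membership in $\mathrm{HSCod}$ with $S=k/\sigma$.

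\textbf{Main obstacle.} I expect the hardest step to be matching the normalisation of Proposition~\ref{prop:smooth-identity}. One must check that the $P_s\Q_t$ term enters with coefficient exactly $1/(2r^2)$ relative to the pointwise source, including the Euclidean correction $-(N-m)/(2r^2)$ with $N=n$. One must also check that the constraint $t\le s$ on $\mathscr T^+$ is what lets the identity be used without boundary terms. In the noncompact case, the regularity needed to differentiate under $P_s$ comes from \cite{HeatKernel2026}.
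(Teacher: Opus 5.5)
Your proposal is correct and follows the paper's proof. The one cosmetic difference is that you bound $\langle A,G\rangle$ by the lower von Neumann (rearrangement) trace inequality and then drop the terms $j<k$, while the paper writes $A\ge(\sigma/k)P$ for the spectral projection $P$ onto the top $q$ Ricci eigenvalues and applies Ky Fan's minimum principle; both give $\langle A,G\rangle\ge(\sigma/k)\sum_{i=1}^q\lambda_i$. The normalization you flag as the main obstacle is settled directly by \eqref{eq:H-Q}, which gives $\mathcal H_{T,x}''+\frac{q}{2r^2}=-\frac{1}{2r^2}\bigl(P_s\Q_t-q\bigr)$ with no boundary terms, so Proposition~\ref{prop:smooth-identity}(ii) with $S=k/\sigma$ applies exactly as you intend.
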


\begin{proof}
Fix $x\in M$ and set $A=\Ric_x^\sharp$ and
$G=(\GF_{t,x})^\sharp$.  Let
$\alpha_1\le\cdots\le\alpha_n$ be the eigenvalues of $A$.
Since $\alpha_1+\cdots+\alpha_k\ge\sigma$, one has
$\alpha_k\ge\sigma/k$.  If $P$ is the spectral projection onto the
eigenspaces corresponding to $\alpha_k,\ldots,\alpha_n$, then
$\operatorname{rank}P=q$ and $A\ge(\sigma/k)P$.
Let $0\le\lambda_1\le\cdots\le\lambda_n\le1$ be the eigenvalues of $G$.
The minimum principle for partial eigenvalue sums gives
\[
 \langle A,G\rangle\ge\frac\sigma k\tr(PG)
 \ge\frac\sigma k\sum_{i=1}^q\lambda_i.
\]
Discarding the first, nonnegative term in \eqref{eq:Q},
\begin{align*}
 \Q_t(x)
 &\ge |I-G|^2+2t\langle A,G\rangle\\
 &\ge\sum_{i=1}^q
 \left((1-\lambda_i)^2+2\frac{\sigma t}{k}\lambda_i\right)\\
 &\ge q\,\Psi\!\left(\frac{\sigma t}{k}\right),
\end{align*}
because the minimum of $(1-\lambda)^2+2u\lambda$ on $[0,1]$ is $\Psi(u)$.
Also
$\Scal=\tr A\ge\sigma+(n-k)\alpha_k\ge n\sigma/k$.
For $t\ge k/\sigma$ the source bound becomes $\Q_t\ge q$; the smooth
identity in Proposition~\ref{prop:smooth-identity} gives the exact
forward codimension-$q$ comparison.  For every $\sigma' < n\sigma/k$, the microscopic scalar clause follows from
part~(i) of \cref{thm:positive-scalar}.
\end{proof}

For an orthonormal frame $e_1,\ldots,e_n$, define
\begin{equation}\label{eq:BHJ-intermediate-curvature}
 \mathcal C_m(e_1,\ldots,e_m)
 :=\sum_{p=1}^m\sum_{r=p+1}^n
   \operatorname{Rm}(e_p,e_r,e_p,e_r),
 \qquad 1\le m\le n-1.
\end{equation}
Thus $\mathcal C_1(e_1)=\Ric(e_1,e_1)$ and
$\mathcal C_{n-1}=\Scal/2$ \cite[Definition~1.1]{BHJ}.

\begin{theorem}[Positive intermediate curvature]
\label{thm:intermediate-curvature}
Let $(M^n,g)$ be complete and connected with $\Ric\ge0$, and suppose that
for every orthonormal frame $e_1,\ldots,e_n$,
\[
 \mathcal C_m(e_1,\ldots,e_m)\ge\kappa>0
 \qquad(1\le m\le n-1).
\]
Set
\[
 q:=n-m+1,\qquad
 \sigma_m:=\frac{2n(n-1)}{m(2n-m-1)}\,\kappa.
\]
Then, for all sufficiently large $\kappa S$,
\begin{equation}
 \inf_{\substack{x\in M\\s\ge t\ge S}}
 P_s\Q_t(x)
 \ge q-C_{n,m}(\kappa S)^{-1/m}.
\end{equation}
In particular, for every $\sigma'<\sigma_m$ and every $q'<q$,
\[
 M\in\mathrm{HSCod}(\sigma',n,q').
\]
At the scalar endpoint $m=n-1$, the sharp endpoint is therefore $q=2$ in
the sense that every $q'<2$ is realized at a finite forward scale.
\end{theorem}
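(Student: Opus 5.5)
The plan has two parts. First I prove a pointwise source bound of the same shape as in \cref{thm:partial-ricci}, valid on the set where $\GF_t$ is nearly degenerate. Then I control the non-degenerate part using the large-time heat-kernel estimate for positive intermediate curvature, \cite[Theorem~3.1]{Koirala2026}, and finally average with $P_s$.

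\emph{Step 1 (pointwise algebra).} Fix $x$ and $t$, and set $A=\Ric_x^\sharp\ge0$ and $G=(\GF_{t,x})^\sharp$, so that $0\le G\le I$. Let $e_1,\dots,e_n$ be an eigenframe of $G$ with eigenvalues $\lambda_1\le\cdots\le\lambda_n$. Write $\langle A,G\rangle=\sum_i\lambda_i\Ric(e_i,e_i)$. The hypothesis $\mathcal C_m\ge\kappa$, averaged over the choices of $m$ frame vectors among $e_1,\dots,e_n$, gives the scalar bound $\Scal\ge\sigma_m$; here the constant $2n(n-1)/(m(2n-m-1))$ is exactly the counting factor. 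Combined with part~(i) of \cref{thm:positive-scalar}, this already yields the microscopic clause for every $\sigma'<\sigma_m$.

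For the large-time clause I discard the Hessian-variance term in \eqref{eq:Q}, exactly as in \cref{lem:scalar-source-floor}. This gives
\[
\Q_t(x)\ge\sum_i(1-\lambda_i)^2+2t\sum_i\lambda_i\Ric(e_i,e_i).
\]
If the $q$ smallest eigenvalues $\lambda_1,\dots,\lambda_q$ are all $\le\delta$, the first sum alone is $\ge q(1-\delta)^2$. Otherwise some $\lambda_j>\delta$ with $j\le q$, so at least $m$ eigenvalues exceed $\delta$. Applying $\mathcal C_m\ge\kappa$ to a frame containing those $m$ directions should give $\langle A,G\rangle\gtrsim\delta\kappa$, and then $\Q_t\ge 2t\delta\kappa\cdot c_{n,m}$, which is large once $t\ge S$ and $\kappa S\delta\gg1$. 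Balancing $\delta$ against $\kappa S$ is supposed to produce the error term $(\kappa S)^{-1/m}$.

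The difficulty is that $\mathcal C_m$ bounds sectional-curvature sums, not $\Ric$ restricted to $m$ directions. So the naive pointwise bound gives only a weaker, codimension-one type exponent. This is the main obstacle, and it is why the pointwise argument alone cannot reach $q$.

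\emph{Step 2 (propagation).} To obtain the full $q=n-m+1$, I would invoke \cite[Theorem~3.1]{Koirala2026}. In the normalization of \cite[Section~2]{HeatKernel2026}, it should give a large-time bound on the heat-dimension defect: $\tr G$ is at most $m-1+C(\kappa t)^{-1/m}$, uniformly in the pole. This is the analogue of the volume growth $R^{m-1}$. Given this, $\sum_i(1-\lambda_i)\ge q-C(\kappa t)^{-1/m}$. Since each $1-\lambda_i\in[0,1]$, and $G\le I$ forces $\lambda_n\le1$, I would combine $\sum(1-\lambda_i)^2$ with the Ricci term $2t\langle A,G\rangle$ as in the minimization giving $\Psi$. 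For $t\ge S$ with $\kappa S$ large, any eigenvalue not close to $0$ pays a large Ricci cost. This yields $\Q_t(x)\ge q-C_{n,m}(\kappa t)^{-1/m}$ pointwise.

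Because $P_s$ is Markovian and stochastically complete, $P_s\Q_t(x)\ge q-C_{n,m}(\kappa S)^{-1/m}$ for $s\ge t\ge S$, which is the displayed infimum bound.

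\emph{Step 3 (codimension clause).} Given $q'<q$, choose $S$ with $C_{n,m}(\kappa S)^{-1/m}\le q-q'$. Along an inverse-time geodesic in $\mathscr T_S^+$, the smooth identity of Proposition~\ref{prop:smooth-identity} expresses the inverse-time second derivative of $\mathbf N_x^{(n)}$ through $-P_s\Q_t$, with the logarithmic normalization producing $(q'/2)\log t$. The bound $P_s\Q_t\ge q'$ then gives concavity of $\mathbf N_x^{(n)}+(q'/2)\log t$, exactly as for the forward comparison in \cref{thm:partial-ricci}. At $m=n-1$ this gives $q=2$, and every $q'<2$ is attained at the finite scale $S$ chosen above.
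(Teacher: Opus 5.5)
Your microscopic clause (frame averaging gives $\Scal\ge\sigma_m$, then part~(i) of \cref{thm:positive-scalar}) and your Step~3 are fine. Step~2 has a genuine gap.

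\textbf{The algebraic step fails.} A trace bound $\tr G\le m-1+\varepsilon$, together with $0\le G\le I$ and the Ricci term, does not force $|I-G|^2+2t\langle A,G\rangle\ge q-O(\varepsilon)$. A trace constraint lets the eigenvalues of $G$ spread out, and eigenvalues in directions where $\Ric$ vanishes pay no Ricci cost. A concrete case: take $n=3$, $m=2$, so $q=2$ and $\mathcal C_2=\Scal/2$. The algebraic curvature tensor with $K_{12}=K_{13}=\sigma/2$ and $K_{23}=-\sigma/2$ has $A=\mathrm{diag}(\sigma,0,0)\ge0$ and $\mathcal C_2\equiv\sigma/2$. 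Taking $G=\mathrm{diag}(0,\tfrac12,\tfrac12)$ gives $\tr G=m-1$, $\langle A,G\rangle=0$, and $|I-G|^2=\tfrac32<2$ for every $t$. So your sentence ``any eigenvalue not close to $0$ pays a large Ricci cost'' reintroduces exactly the obstruction you identified in Step~1.

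\textbf{The input is also misattributed.} The estimate from \cite[Theorem~3.1]{Koirala2026} used in the paper is not a pointwise bound on $\tr G$. It is a heat-averaged bound on the $q$-th eigenvalue,
$P_t\lambda_q(\cdot,t)(x)\le C_{n,m}\min\{1,(\kappa t)^{-1/m}\}$. A pointwise source bound would make the restriction $s\ge t$ in the statement superfluous, which signals that it is stronger than what is available.

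\textbf{The fix.} Drop the Ricci term entirely and work with $\lambda_q$. You already observed in Step~1 that, using only $\Ric\ge0$,
$\Q_t\ge\sum_{i\le q}(1-\lambda_i)^2\ge q(1-\lambda_q)^2$.
For $s\ge t$ the semigroup property gives
$P_s\lambda_q(\cdot,t)=P_{s-t}\bigl(P_t\lambda_q(\cdot,t)\bigr)\le C_{n,m}(\kappa t)^{-1/m}$,
because the averaged estimate is uniform in the pole and $P_{s-t}$ is Markovian. Jensen's inequality for $P_s$ then gives
\[
 P_s\Q_t(x)\ge q\bigl(1-P_s\lambda_q(\cdot,t)(x)\bigr)^2\ge q-2qC_{n,m}(\kappa t)^{-1/m}.
\]
After enlarging the constant and using $t\ge S$, this is the displayed infimum bound on $s\ge t\ge S$. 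This is precisely where the forward sector $s\ge t$ enters, and your Step~3 then goes through unchanged.
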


\begin{proof}
Let $0\le\lambda_1(x,t)\le\cdots\le\lambda_n(x,t)\le1$ be the eigenvalues
of $(\GF_{t,x})^\sharp$.  The heat-averaged eigenvalue estimate
\cite[Theorem~3.1]{Koirala2026} gives
\[
 P_t\lambda_q(\cdot,t)(x)
 \le C_{n,m}\min\{1,(\kappa t)^{-1/m}\}.
\]
If $s\ge t$, the semigroup property therefore yields
\[
 P_s\lambda_q(\cdot,t)(x)
 \le C_{n,m}(\kappa t)^{-1/m}
\]
for large $\kappa t$.  Since $\Ric\ge0$, \eqref{eq:Q} gives
\[
 \Q_t\ge |g-\GF_t|^2\ge q(1-\lambda_q)^2.
\]
Hence
\[
 P_s\Q_t(x)
 \ge q\bigl(1-P_s\lambda_q(\cdot,t)(x)\bigr)^2
 \ge q-C_{n,m}(\kappa t)^{-1/m},
\]
after enlarging $C_{n,m}$; factors depending on $q\le n$ are absorbed into
the dimensional constant.
The smooth identity converts this source lower bound on
$s\ge t\ge S$ into the corresponding forward codimension comparison.

Averaging \eqref{eq:BHJ-intermediate-curvature} over orthonormal frames gives
\[
 \mathbb E_{\mathrm{frame}}[\mathcal C_m]
 =\frac{m(2n-m-1)}{2n(n-1)}\Scal.
\]
Thus $\mathcal C_m\ge\kappa$ implies $\Scal\ge\sigma_m$, and the microscopic
scalar clause follows from part~(i) of \cref{thm:positive-scalar}.
\end{proof}

For $m=n-1$, one has $\mathcal C_{n-1}=\Scal/2$ and $q=2$.  The product
$S^2_a\times\mathbb R^{n-2}$, with $a^2=2/\sigma$, realizes this endpoint.
For $m=1$ the hypothesis is $\Ric\ge\kappa>0$, so Bonnet--Myers forces compactness; the noncompact intermediate-curvature applications begin with $m=2$.

\section{Synthetic formulation}

The definitions below use only the heat kernel, its entropy, and the
semigroup.

\subsection{Metric measure Dirichlet spaces and pointed Nash entropy}

\begin{definition}[Metric measure Dirichlet space]
A \emph{metric measure Dirichlet space} (MMD space) consists of a proper
complete separable metric space $(X,\mathsf d)$, a full-support Radon measure
$\mathfrak m$, and a strongly local regular symmetric Dirichlet form
$(\mathcal E,\mathcal D(\mathcal E))$ on $L^2(X,\mathfrak m)$.  We assume
that its energy measures are absolutely continuous with respect to
$\mathfrak m$ and write $\Gamma$ for the carr\'e du champ density.  The
intrinsic distance
\[
 \mathsf d_{\mathcal E}(x,y)
 :=\sup\bigl\{f(x)-f(y):f\in\mathcal D_{\mathrm{loc}}(\mathcal E)\cap C(X),
                  \ \Gamma(f)\le1\ \mathfrak m\text{-a.e.}\bigr\}
\]
is required to equal $\mathsf d$.
\end{definition}

The terminology is standard in the Dirichlet-form literature
\cite{KajinoMurugan}; compare also the Riemannian Energy measure spaces of
\cite{AGSBE}.

Throughout the paper we impose the following standing heat assumptions on an
MMD space.  The semigroup $(P_t)_{t>0}$ is conservative and admits a jointly
continuous, strictly positive, symmetric heat kernel
$p:(0,\infty)\times X\times X\to(0,\infty)$ such that, for bounded
Borel $f$ and all $s,t>0$,
\begin{align*}
 P_t f(x)&=\int_Xp_t(x,y)f(y)\,\d\mathfrak m(y),
 &\int_Xp_t(x,y)\,\d\mathfrak m(y)&=1,\\
 p_{t+s}(x,y)&=\int_Xp_t(x,z)p_s(z,y)\,\d\mathfrak m(z).
\end{align*}
For the normalization dimensions used below, we assume that the pointed
Nash entropies are finite for every pole and every positive time, and that
$\mathcal N_\bullet^{(N)}(t)$ is integrable against
$p_s(x,\cdot)\mathfrak m$ for every $x$ and $s,t>0$.

We reserve $P_t$ for the heat semigroup on functions.  Its dual heat
flow on finite measures is denoted by $\mathsf H_t=(P_t)^*$ and is
defined by
$\int_X f\,\d(\mathsf H_t\mu)=\int_XP_t f\,\d\mu$ for bounded Borel
$f$.  In particular,
$\mu_{t,x}:=\mathsf H_t\delta_x=p_t(x,\cdot)\mathfrak m$.
The smooth manifolds used in Section~2 satisfy these assumptions under the
hypotheses imposed there; in the complete noncompact case the positive-time
statements used below are collected in \cite[Sections~2.4--2.5 and
Appendix~B]{HeatKernel2026}.  Finite-dimensional $\mathrm{RCD}(0,N)$ spaces
have the conservative heat flow and heat-kernel bounds used below; see
\cite{EKS,JiangLiZhang}.

For $\mu=\rho\mathfrak m$ define
$\operatorname{Ent}_{\mathfrak m}(\mu):=\int_X\rho\log\rho\,\d\mathfrak m$.
Fix a normalization dimension $N>0$.  The $N$-normalized pointed Nash
entropy is
\begin{equation}\label{eq:Nash-entropy}
 \mathcal N_x^{(N)}(t)
 :=-\operatorname{Ent}_{\mathfrak m}(\mu_{t,x})
 -\frac N2\log(4\pi t)-\frac N2.
\end{equation}
For $t,s>0$ set
\begin{equation}\label{eq:propagated-Nash}
 \mathbf N_x^{(N)}(t,s)
 :=P_s\bigl(\mathcal N_\bullet^{(N)}(t)\bigr)(x).
\end{equation}
The finite-distance components of $\mathsf d_{\mathrm{it}}$ are the
heat-time leaves $\Sigma_T=\pi^{-1}(T)$, identified isometrically with
$(T^{-1},\infty)$ by $r=t^{-1}$.  The inverse-time Nash profile is the
coordinate expression of \eqref{eq:propagated-Nash} on such a component:
\[
 \mathcal H_{T,x}(r)
 :=\mathbf N_x^{(N)}(r^{-1},T-r^{-1}),
 \qquad r>T^{-1}.
\]

\subsection{Inverse-time concavity and smooth calibration}

For one heat-time leaf $\Sigma_T$, let $T^{-1}<a<b<c$ and put
$\lambda=(b-a)/(c-a)$.  The three-point forms of the two model corrections
are
\begin{align*}
 \mathfrak d^0(a,b,c)
 &:=\frac12\left(\frac{1-\lambda}{a}+\frac\lambda c-\frac1b\right),\\
 \mathfrak d^\infty(a,b,c)
 &:=\frac12\left(\log b-(1-\lambda)\log a-\lambda\log c\right).
\end{align*}
Since $r=t^{-1}$ is an isometric coordinate, the scalar lower bound $\sigma$
with witness scale $\tau$ is equivalent to
\[
 \mathcal H_{T,x}(b)
 \ge(1-\lambda)\mathcal H_{T,x}(a)+\lambda\mathcal H_{T,x}(c)
   +\sigma\mathfrak d^0(a,b,c)
\]
for every triple whose heat-time points lie in $(0,\tau)^2$.  For $q>0$,
the macroscopic clause with witness scale $S$ is equivalent to
\[
 \mathcal H_{T,x}(b)
 \ge(1-\lambda)\mathcal H_{T,x}(a)+\lambda\mathcal H_{T,x}(c)
   +q\mathfrak d^\infty(a,b,c)
\]
for every triple in $\mathscr T_S^+$.

We say that $X$ satisfies the \emph{exact forward codimension-$q$
comparison} if $\mathbf N_x^{(N)}+(q/2)\log t$ is concave along every
inverse-time geodesic contained in $\mathscr T^+$.  Equivalently, on every
forward half-leaf,
\begin{equation}
 \mathcal H_{T,x}''(r)\le-\frac{q}{2r^2}
\end{equation}
in the distributional sense.

\begin{proposition}[Smooth identity]\label{prop:smooth-identity}
Let $(M^n,g)$ be smooth and use the Nash normalization $N=n$.  Assume that
the positive-time heat quantities above are finite and smooth and that the
pole differentiations may be passed through the heat-kernel integrals.  This
holds, in particular, on closed manifolds and in the complete
nonnegative-Ricci setting used below.  Then
\begin{equation}\label{eq:H-Q}
 \mathcal H_{T,x}''(r)
 =-\frac{1}{2r^2}
 P_{T-r^{-1}}\Q_{r^{-1}}(x).
\end{equation}
Consequently, for every $\tau,S>0$:
\begin{enumerate}[label=\textup{(\roman*)}]
\item $\mathbf N_x^{(n)}+(\sigma/2)t$ is concave along every inverse-time
geodesic in $(0,\tau)^2$ if and only if
\begin{equation}\label{eq:smooth-micro-source}
 P_s\Q_t(x)\ge2\sigma t
 \qquad(x\in M,\ 0<t,s<\tau).
\end{equation}
\item $\mathbf N_x^{(n)}+(q/2)\log t$ is concave along every inverse-time
geodesic in $\mathscr T_S^+$ if and only if
\begin{equation}\label{eq:smooth-macro-source}
 P_s\Q_t(x)\ge q
 \qquad(x\in M,\ s\ge t\ge S).
\end{equation}
\end{enumerate}
\end{proposition}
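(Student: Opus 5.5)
Our plan is to compute $\partial_t\mathcal N_x^{(n)}(t)$ and $\partial_t^2\mathcal N_x^{(n)}(t)$ in the pole-centered form, and then add the second heat time by means of the semigroup. First, set $h=-\log p_t(x,\cdot)$ and write $-\operatorname{Ent}(\mu_{t,x})=\int h\,\d\mu_{t,x}$. The heat equation and integration by parts (justified by the assumed finiteness and regularity) give the standard identity
\[
\partial_t\bigl(-\operatorname{Ent}(\mu_{t,x})\bigr)=\int|\nabla_y h|^2\d\mu_{t,x}.
\]
We then convert this to the pole variable. By symmetry of the kernel, $p_t(x,y)=p_t(y,x)$, and the $y$-Fisher information of $\mu_{t,x}$ equals $-\int\Delta_x\log p_t\,\d\mu_{t,x}$ plus terms involving $\int|\ell|^2$. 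Using the score identities $\int\ell\,\d\mu=0$ together with $\mathrm d(P_tf)=\int f\ell\,\d\mu$, we plan to express
\[
\partial_t\mathcal N_x^{(n)}(t)=-\frac{\E_t(x)}{t^2}\quad\text{(up to the normalization of }\E_t),
\]
that is, $t\,\partial_t\mathcal N=-(n-\tr\GF_t)/2$. As a check, on $\mathbb R^n$ one has $\GF_t=g$, so the derivative vanishes.

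Next we compute the second $t$-derivative, differentiating $\tr\GF_{t,x}=2t\int|\ell|^2\d\mu_{t,x}$. Here the Bochner formula in the pole variable, in the form of Ni's linear entropy formula \cite[Theorem~0.1]{Ni}, produces the Hessian--Ricci term. Completing the square around $\GF_t/(2t)$ produces the centered Hessian variance together with $|g-\GF_t|^2$. The goal is
\[
\partial_t\bigl(t^2\partial_t\mathcal N_x(t)\bigr)=-\tfrac12\Q_t(x)+\bigl(\text{a term }\Delta_x\text{ of something}\bigr).
\]
This step is the main obstacle. The derivative in $t$ of quantities at a fixed pole yields $\Delta_x$-terms, because $\partial_tp_t=\Delta_xp_t$. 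These do not vanish pointwise, and this is exactly why the second heat time is required. We intend to rely on the centered identity of \cite[Sections~2.4--2.5, Appendix~B]{HeatKernel2026} rather than redo it, writing it as
\[
(\partial_t-\Delta_x)\bigl(t^2\partial_t\mathcal N_\bullet(t)\bigr)(x)=-\tfrac12\Q_t(x).
\]

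Now restrict to the leaf $t+s=T$ and use $r=t^{-1}$. Define $F(t)=P_{T-t}\mathcal N_\bullet(t)(x)$. Then
\[
F'(t)=P_{T-t}\bigl(\partial_t\mathcal N-\Delta\mathcal N\bigr),
\]
since $\partial_sP_s=P_s\Delta$ (taken with a negative sign because $s=T-t$). In the variable $r$ we have $\frac{d^2}{dr^2}=t^4\partial_t^2+2t^3\partial_t=t^2\partial_t(t^2\partial_t)$. Applying this to $F$ and commuting $P_{T-t}$ through gives
\[
\mathcal H''(r)=t^2P_{T-t}\bigl[(\partial_t-\Delta)(t^2\partial_t-\cdots)\bigr].
\]
With care about the commutation, namely that $(\partial_t-\Delta)$ applied inside $P_{T-t}$ is precisely the total derivative, we obtain $\mathcal H''=-\tfrac{t^2}{2}P_s\Q_t=-\frac1{2r^2}P_s\Q_t$, which is \eqref{eq:H-Q}.

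Finally, the two consequences (i) and (ii) follow from \eqref{eq:H-Q} by a one-variable computation. In the $r$-coordinate, $(\sigma/2)t=\sigma/(2r)$ has second derivative $\sigma/r^3=\sigma t/r^2$, and $(q/2)\log t=-(q/2)\log r$ has second derivative $q/(2r^2)$. Concavity of a $C^2$ function on an interval is equivalent to a nonpositive second derivative, which gives
\[
-\tfrac1{2r^2}P_s\Q_t+\tfrac{\sigma t}{r^2}\le0 \iff P_s\Q_t\ge2\sigma t,
\]
and analogously $P_s\Q_t\ge q$. The geodesics in $(0,\tau)^2$ and in $\mathscr T_S^+$ sweep out exactly the corresponding parameter sets $\{0<t,s<\tau\}$ and $\{s\ge t\ge S\}$, and both sets are convex within each leaf, so the equivalence holds pointwise.
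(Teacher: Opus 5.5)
Your architecture matches the paper's: a first-order entropy identity, a second-order identity producing $\Q_t$, differentiation of $F(t)=P_{T-t}\mathcal N_\bullet(t)(x)$ along the leaf, and the substitution $r=t^{-1}$. Your derivation of (i) and (ii) from \eqref{eq:H-Q}, including the observation that each leaf meets $(0,\tau)^2$ and $\mathscr T_S^+$ in an interval, is fine. The gap is in the two identities you feed into the leaf computation; with them as written, the computation does not close.

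The first identity is not $\partial_t\mathcal N_x^{(n)}(t)=-\E_t(x)/t^2$. The heat equation gives $\partial_t\bigl(-\operatorname{Ent}(\mu_{t,x})\bigr)=\int|\nabla_y\log p_t|^2\d\mu_{t,x}$, which is the Fisher information in the \emph{target} variable. Differentiating twice in the pole, and using $\Delta_xp_t=\Delta_yp_t$ and $\int\Delta_xp_t(x,y)\d\Vol(y)=0$, gives
\[
 \Delta_x\bigl(-\operatorname{Ent}(\mu_{t,x})\bigr)
 =\int|\nabla_y\log p_t|^2\d\mu_{t,x}-\int|\nabla_x\log p_t|^2\d\mu_{t,x}.
\]
Symmetry of the kernel does not identify these two Fisher informations. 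What is true is
\[
 (\partial_t-\Delta)\mathcal N_\bullet^{(n)}(t)=\frac{\tr\GF_t-n}{2t}=-\frac{\E_t}{t^2}.
\]
Your version holds only where $\Delta_x\operatorname{Ent}(\mu_{t,x})=0$, for example when the entropy is pole-independent as on $\R^n$. That is why your Euclidean check cannot detect the error.

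The same slip propagates to the second step. Along the leaf,
\[
 t^2F'(t)=P_{T-t}\bigl[t^2(\partial_t-\Delta)\mathcal N_\bullet(t)\bigr](x)=-P_{T-t}\E_t(x).
\]
Differentiating once more therefore requires exactly $(\partial_t-\Delta)\E_t=\frac12\Q_t$, that is, $(\partial_t-\Delta)\bigl(t^2(\partial_t-\Delta)\mathcal N\bigr)=-\frac12\Q_t$. The identity you quote, $(\partial_t-\Delta)(t^2\partial_t\mathcal N)=-\frac12\Q_t$, differs from this by
\[
 (\partial_t-\Delta)\bigl(t^2\Delta\mathcal N_\bullet(t)\bigr)=2t\,\Delta\mathcal N_\bullet(t)-\Delta\E_t.
\]
This is precisely the term hidden in your ``$t^2\partial_t-\cdots$''. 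It is not zero in general, and no care about commuting $P_{T-t}$ removes it.

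The repair is to use the heat operator $\partial_t-\Delta$ instead of $\partial_t$ in both identities: $t^2(\partial_t-\Delta)\mathcal N=-\E_t$ and $(\partial_t-\Delta)\E_t=\frac12\Q_t$. Then $\frac{\d}{\d t}\bigl(t^2F'\bigr)=-\frac12P_{T-t}\Q_t(x)$ holds exactly, and the argument becomes the paper's proof.
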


\begin{proof}
The pointed Nash-entropy identities give
\[
 (\partial_t-\Delta)\mathcal N_\bullet^{(n)}(t)=-\frac{\E_t}{t^2},
 \qquad
 (\partial_t-\Delta)\E_t=\frac12\Q_t.
\]
Differentiating \eqref{eq:propagated-Nash} along $t+s=T$ yields
\[
 \frac{\d}{\d t}\left(t^2\frac{\d}{\d t}
 \mathbf N_x^{(n)}(t,T-t)\right)
 =-\frac12P_{T-t}\Q_t(x).
\]
The substitution $r=t^{-1}$ gives \eqref{eq:H-Q}.  Adding
$\sigma/(2r)$ or $-(q/2)\log r$ to $\mathcal H_{T,x}$ and differentiating
twice gives \eqref{eq:smooth-micro-source} and
\eqref{eq:smooth-macro-source}.
\end{proof}

\begin{remark}[Normalization dimension]
The identity above is written with the geometric normalization $N=n$.  If the
same $n$-manifold is normalized instead by $\bar N\ne n$, then
\[
 \mathcal H_{T,x}^{(\bar N)}(r)
 =\mathcal H_{T,x}^{(n)}(r)+\frac{\bar N-n}{2}\log r+\text{constant},
\]
and hence
\[
 (\mathcal H_{T,x}^{(\bar N)})''(r)
 =-\frac{1}{2r^2}\Bigl(P_{T-r^{-1}}\Q_{r^{-1}}(x)
       +\bar N-n\Bigr).
\]
The additional term is the normalization-dimension contribution that also
appears in the Euclidean calculation below.  It also shows why the geometric
normalization is singled out by scalar identification.  As $r\to\infty$,
$P_{T-r^{-1}}\Q_{r^{-1}}=O(r^{-1})$.  Hence if $\bar N>n$, the negative
term $-(\bar N-n)/(2r^2)$ dominates and the microscopic condition holds for
every finite scalar coefficient at sufficiently small heat time; if
$\bar N<n$, the opposite sign prevents any finite scalar coefficient from
holding at sufficiently small heat time.  Only $\bar N=n$ gives the
nondegenerate scalar limit of \cref{thm:smooth-identification}.
\end{remark}

\begin{proof}[Proof of Theorem~\ref{thm:smooth-identification}]
Uniformly as $t,s\downarrow0$,
\[
 \frac{P_s\Q_t(x)}{2t}=\Scal(x)+o(1).
\]
If the finite-scale scalar comparison with coefficient $\sigma$ holds,
Proposition~\ref{prop:smooth-identity} gives
$P_s\Q_t(x)/(2t)\ge\sigma$ on some small heat square; letting
$t,s\downarrow0$ yields $\Scal\ge\sigma$.  Conversely, if
$\min_M\Scal>\sigma$, uniformity of the same expansion gives a witness
$\tau>0$ on which $P_s\Q_t/(2t)\ge\sigma$.  The smooth identity
then gives the required inverse-time concavity.  Formula
The equality in Theorem~\ref{thm:smooth-identification} follows immediately.  The supremum need not be attained, because the finite witness scale in the converse may shrink to zero at the endpoint.
\end{proof}

\begin{proof}[Proof of Theorem~\ref{thm:positive-scalar}]
For (i), Lemma~\ref{lem:scalar-source-floor} gives
$\Q_t\ge\Psi(\sigma_0t)$.  Hence for $0<t\le\sigma_0^{-1}$,
\[
 \frac{P_s\Q_t}{2t}\ge\sigma_0-\frac{\sigma_0^2t}{2},
\]
and the stated choice of $\tau$ makes the right-hand side at least $\sigma$.
The conclusion follows from Proposition~\ref{prop:smooth-identity}.

For (ii), after scaling the scalar lower bound to one, the propagated source
estimate \cite[Theorem~3.1]{HeatKernel2026} gives
\[
 P_s\Q_t(x)
 \ge2-C_N\bigl((\sigma_0t)^{-1}+(\sigma_0s)^{-1}\bigr)^{1/3}.
\]
After enlarging $C_N$, the condition $C_N(\sigma_0S)^{-1/3}\le2-q$ implies $P_s\Q_t\ge q$ for $s\ge t\ge S$.  The macroscopic comparison follows
from Proposition~\ref{prop:smooth-identity}, and the microscopic clause follows from
(i).
\end{proof}

\subsection{Basic properties}

\begin{proposition}[Basic properties]\label{prop:basic-properties}
It holds the following properties:
\begin{enumerate}[label=\textup{(\roman*)}]
\item \emph{Monotonicity.}  If $X\in\mathrm{HSCod}(\sigma,N,q)$, then
$X\in\mathrm{HSCod}(\sigma',N,q')$ for every
$\sigma'\le\sigma$ and $0\le q'\le q$, with the same witness scales whenever
the corresponding clause is nonvacuous.
\item \emph{Scaling and measure renormalization.}  For $a,c>0$, equip $X$
with $\mathsf d_a=a\mathsf d$, $\mathfrak m_a=c\mathfrak m$, and
$\mathcal E_{a,c}=ca^{-2}\mathcal E$.  Then
\[
 X\in\mathrm{HSCod}(\sigma,N,q)
 \quad\Longrightarrow\quad
 (X,a\mathsf d,c\mathfrak m)
 \in\mathrm{HSCod}(a^{-2}\sigma,N,q).
\]
The same statement with only the scalar clause also holds.
\item \emph{Products.}  If $X_i$ has heat scalar curvature at least
$\sigma_i$ in dimension $N_i$, $i=1,2$, then $X_1\times X_2$ has heat scalar
curvature at least $\sigma_1+\sigma_2$ in dimension $N_1+N_2$.  If moreover
$q_1,q_2>0$ and
$X_i\in\mathrm{HSCod}(\sigma_i,N_i,q_i)$, then
\[
 X_1\times X_2
 \in\mathrm{HSCod}(\sigma_1+\sigma_2,N_1+N_2,q_1+q_2).
\]
If $X\in\mathrm{HSCod}(\sigma,N,q)$, then for every $m\ge1$,
\[
 X\times\mathbb R^m\in\mathrm{HSCod}(\sigma,N+m,q),
\]
with the Euclidean factor normalized by its dimension.
\end{enumerate}
\end{proposition}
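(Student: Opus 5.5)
The plan is to reduce all three items to two elementary facts: sums of concave functions are concave, and the two model corrections are themselves concave in the inverse-time coordinate. We work throughout in the coordinate $r=t^{-1}$ on a leaf $\Sigma_T$, where inverse-time geodesics are affine segments.

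\textbf{(i) Monotonicity.} On $\Sigma_T$ the correction $(\sigma/2)t$ becomes $\sigma/(2r)$, and $(q/2)\log t$ becomes $-(q/2)\log r$. We write
\[
 \mathbf N_x^{(N)}+\tfrac{\sigma'}2t=\Bigl(\mathbf N_x^{(N)}+\tfrac{\sigma}2t\Bigr)+\tfrac{\sigma'-\sigma}{2}\,r^{-1}.
\]
The last term is concave in $r$, since $\sigma'-\sigma\le0$ and $r^{-1}$ is convex on $r>0$. Similarly,
\[
 \mathbf N_x^{(N)}+\tfrac{q'}2\log t=\Bigl(\mathbf N_x^{(N)}+\tfrac q2\log t\Bigr)+\tfrac{q-q'}{2}\log r,
\]
and the added term is concave because $q-q'\ge0$. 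Hence both clauses pass to $(\sigma',q')$ with the same witness scales. When $q'=0$ the large-time clause is simply dropped.

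\textbf{(ii) Scaling.} The rescaled form $ca^{-2}\mathcal E$ on $L^2(c\mathfrak m)$ has generator $a^{-2}\Delta$. Its intrinsic distance is $a\mathsf d$, so the rescaled space is again an MMD space. Its semigroup is $P^{a,c}_t=P_{a^{-2}t}$, with heat kernel $p^{a,c}_t(x,y)=c^{-1}p_{a^{-2}t}(x,y)$ with respect to $c\mathfrak m$. The heat measures are therefore unchanged up to the time change, and $\operatorname{Ent}_{c\mathfrak m}(\mu)=\operatorname{Ent}_{\mathfrak m}(\mu)-\log c$. Substituting into \eqref{eq:Nash-entropy} gives
\[
 \mathcal N^{(N),a,c}_x(t)=\mathcal N^{(N)}_x(a^{-2}t)+\log c-\tfrac N2\log a^{2}.
\]
Applying the rescaled semigroup to this identity yields $\mathbf N^{a,c}_x(t,s)=\mathbf N_x(a^{-2}t,a^{-2}s)+\mathrm{const}$. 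The standing integrability assumptions transfer by the same identities.

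The dilation $(t,s)\mapsto(a^{-2}t,a^{-2}s)$ maps leaves to leaves and acts linearly on $r$, so it maps inverse-time geodesics to inverse-time geodesics. It also maps $(0,a^2\tau)^2$ onto $(0,\tau)^2$ and $\mathscr T^+_{a^2S}$ onto $\mathscr T^+_S$. Under this dilation, $(\sigma/2)a^{-2}t=(a^{-2}\sigma)t/2$, and $(q/2)\log(a^{-2}t)$ differs from $(q/2)\log t$ by a constant. The clauses therefore hold for the rescaled space with parameters $a^{-2}\sigma$ and $q$, and witness scales $a^2\tau$ and $a^2S$.

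\textbf{(iii) Products.} First we check that $X_1\times X_2$, with the product measure and the form $\mathcal E_1\otimes1+1\otimes\mathcal E_2$, is an MMD space satisfying the standing assumptions. Its carr\'e du champ is $\Gamma_1+\Gamma_2$, its intrinsic distance is the $\ell^2$-product distance, and its heat kernel is $p^1_t p^2_t$, which is conservative, jointly continuous and positive.

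Since the density factorizes, the entropy adds, and the normalizations add because $N_1+N_2$ splits the logarithmic and constant terms. Thus
\[
 \mathcal N^{(N_1+N_2)}_{(x_1,x_2)}(t)=\mathcal N^{(N_1)}_{x_1}(t)+\mathcal N^{(N_2)}_{x_2}(t).
\]
The product semigroup acts on a sum of single-variable functions factorwise, by Fubini and the conservativeness of each factor. Hence
\[
 \mathbf N_{(x_1,x_2)}(t,s)=\mathbf N^1_{x_1}(t,s)+\mathbf N^2_{x_2}(t,s).
\]
Both summands are evaluated along the same inverse-time geodesic. Adding the concavity statements gives the scalar clause with coefficient $\sigma_1+\sigma_2$ on $(0,\min\{\tau_1,\tau_2\})^2$. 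It gives the codimension clause with coefficient $q_1+q_2$ on $\mathscr T^+_{\max\{S_1,S_2\}}$, which lies in both $\mathscr T^+_{S_i}$.

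For $X\times\mathbb R^m$ we do not invoke the $q_i>0$ statement. Instead we use directly that $\mathbb R^m$ normalized by $m$ has constant $\mathcal N$, so its contribution to $\mathbf N$ is constant. Adding a constant changes neither the scalar clause nor the codimension clause of $X$, and the witness scales are unchanged.

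\textbf{Main obstacle.} The expected difficulty is not the concavity algebra but the bookkeeping in (iii). One must verify that the product space inherits the standing assumptions: identification of the intrinsic distance, and finiteness and integrability of $\mathcal N_\bullet(t)$ against $p_s\mathfrak m$ so that Fubini applies. I would handle this using the factorization together with the assumed finiteness of the factor entropies and the integrability of each $\mathcal N^{(N_i)}$.
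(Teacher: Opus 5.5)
Your proposal is correct and follows essentially the same route as the paper. In (i) you add a concave multiple of $r^{-1}$ or $\log r$ on each leaf. In (ii) you use the identity $\mathcal N^{(N),a,c}_x(t)=\mathcal N^{(N)}_x(a^{-2}t)+\log c-N\log a$ together with $P^{a,c}_t=P_{a^{-2}t}$. In (iii) you use heat-kernel factorization to make $\mathbf N$ additive, take the smaller microscopic and larger forward witness scales, and treat the Euclidean factor as a constant contribution. Your version is more explicit than the paper's: it records the rescaled witness scales $a^2\tau$ and $a^2S$, and it flags that the product space must satisfy the standing MMD and integrability assumptions, though, like the paper, you do not fully prove this.
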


\begin{proof}
For (i), along an inverse-time geodesic both $t$ and $\log t$ are
convex functions of the affine coordinate $r=t^{-1}$.  Decreasing either
coefficient therefore adds a concave function to the corresponding corrected
Nash entropy.

For (ii), the rescaled semigroup is $P_t^a=P_{a^{-2}t}$ and
\[
 \mathcal N_x^{(N),(a,c)}(t)
 =\mathcal N_x^{(N)}(a^{-2}t)-N\log a+\log c.
\]
After the common quadratic time rescaling, the scalar coefficient becomes
$a^{-2}\sigma$, while the logarithmic correction and the forward sector are
unchanged.

For (iii), product heat kernels factorize, so normalized pointed Nash entropy
and its propagated two-time functional are additive.  Taking the smaller
microscopic witness scale and, when $q_1,q_2>0$, the larger forward witness
scale gives the product assertions.  For $\mathbb R^m$ normalized by $m$,
the normalized pointed Nash entropy is identically zero.
\end{proof}

\subsection{Entropy--heat stability}

\begin{definition}[Entropy--heat convergence]
A sequence of pointed MMD spaces
$(X_i,\mathsf d_i,\mathfrak m_i,\mathcal E_i,x_i)$ converges in
\emph{entropy--heat sense} to
$(X,\mathsf d,\mathfrak m,\mathcal E,x)$ if the underlying pointed measured
spaces converge in a fixed geometric construction and, for every convergent
sequence of poles $z_i\to z$,
\[
 \mathcal N_{z_i}^{(N),i}(t)\longrightarrow\mathcal N_z^{(N)}(t),
 \qquad
 \mathbf N_{z_i}^{(N),i}(t,s)\longrightarrow\mathbf N_z^{(N)}(t,s)
\]
locally uniformly for $t>0$ and for $(t,s)\in(0,\infty)^2$, respectively.
\end{definition}

Equivalently, $\mathcal H_{T,z_i}^i$ converges locally uniformly on each
fixed heat-time leaf.

\begin{proposition}[Pointed $\mathrm{RCD}$ construction of entropy--heat
convergence]\label{prop:RCD-entropy-heat}
Let $1<N<\infty$ and let
$(X_i,\mathsf d_i,\mathfrak m_i,x_i)$ be pointed $\mathrm{RCD}(0,N)$ spaces with
\begin{equation}\label{eq:RCD-normalization}
 \mathfrak m_i(B_1(x_i))=1.
\end{equation}
If they converge in the pointed measured Gromov--Hausdorff sense to
$(X,\mathsf d,\mathfrak m,x)$, then they converge in entropy--heat sense.  More
specifically, for convergent poles $z_i\to z$,
\begin{equation}\label{eq:RCD-entropy-heat-local}
 \mathcal N_{z_i}^{(N),i}(t)\longrightarrow\mathcal N_z^{(N)}(t),
 \qquad
 P^i_s\bigl(\mathcal N_\bullet^{(N),i}(t)\bigr)(z_i)
 \longrightarrow
 P_s\bigl(\mathcal N_\bullet^{(N)}(t)\bigr)(z)
\end{equation}
locally uniformly for $t>0$ and for $(t,s)\in(0,\infty)^2$, respectively.
\end{proposition}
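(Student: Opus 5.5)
We need to show that pmGH convergence of normalized pointed $\mathrm{RCD}(0,N)$ spaces implies locally uniform convergence of the Nash entropies and of their heat propagations. The argument combines uniform two-sided Gaussian heat-kernel bounds, which hold under \eqref{eq:RCD-normalization} and volume doubling, with the known local uniform convergence of heat kernels under pmGH convergence of $\mathrm{RCD}$ spaces. The model results are Ambrosio--Honda--Gigli-type heat-kernel stability and the Jiang--Li--Zhang bounds \cite{JiangLiZhang}.

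\textbf{Uniform heat-kernel control.} By Bishop--Gromov and \eqref{eq:RCD-normalization}, for $z$ in a fixed ball $B_R(x_i)$ the volumes $\mathfrak m_i(B_{\sqrt t}(z))$ are bounded above and below by constants $C(N,R,t)$, uniformly in $i$, locally uniformly in $t>0$. The Li--Yau/Sturm Gaussian estimates
\[
 \frac{c_1}{\mathfrak m_i(B_{\sqrt t}(z))}e^{-\mathsf d_i(z,y)^2/(3t)}\le p^i_t(z,y)\le\frac{C_1}{\mathfrak m_i(B_{\sqrt t}(z))}e^{-\mathsf d_i(z,y)^2/(5t)}
\]
then give $|\log p_t^i(z,y)|\le C(1+\mathsf d_i(z,y)^2/t)$, with $C=C(N,R,t)$ locally uniform. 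The tails of $p\log p$ are dominated by Gaussian moments against polynomially growing volume, so they are uniformly small outside large balls. Moreover, $\mathcal N^{(N),i}_z(t)$ grows at most like $C(1+\log(1+\mathsf d_i(x_i,z)))$, which will be needed for the propagation.

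\textbf{Entropy convergence.} Realize the convergence in a common proper metric space $(Z,\mathsf d_Z)$ with $\mathfrak m_i\rightharpoonup\mathfrak m$ weakly in duality with $C_c(Z)$. Local uniform convergence $p^i_t(z_i,y_i)\to p_t(z,y)$ for $y_i\to y$, uniformly in $t$ on compacts, follows from \cite{AGSBE,EKS}-type Mosco convergence together with the equicontinuity provided by the gradient and Harnack estimates. Cut the entropy integral at $B_R(z)$. Inside, the integrand $p_t^i\log p_t^i$ is equicontinuous and bounded, so weak convergence of measures plus uniform convergence of integrands passes to the limit. Outside, the tail bounds above make the contribution small uniformly in $i$. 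This gives the first limit in \eqref{eq:RCD-entropy-heat-local}, and it holds uniformly for $t$ in compacts and $z_i$ in bounded sets, by equicontinuity in $(t,z)$ coming from the Harnack inequality.

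\textbf{Propagated functional, and the main obstacle.} Write
\[
 \mathbf N^{(N),i}_{z_i}(t,s)=\int p^i_s(z_i,w)\,\mathcal N^{(N),i}_w(t)\,\d\mathfrak m_i(w).
\]
The integrand has two pieces. First, $p_s^i(z_i,\cdot)$ converges locally uniformly with Gaussian tails. Second, $w\mapsto\mathcal N^{(N),i}_w(t)$ converges locally uniformly by the previous step, once we know it is equicontinuous in $w$. That equicontinuity is the \textbf{main obstacle}. I would obtain it from the Lipschitz estimate $|\nabla_z\log p_t(z,y)|\le C(t^{-1/2}+\mathsf d(z,y)/t)$ on $\mathrm{RCD}(0,N)$ spaces \cite{JiangLiZhang}, differentiating the entropy under the integral. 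The growth bound on $\mathcal N_w$ is only logarithmic in $\mathsf d(x_i,w)$, so the Gaussian tail of $p_s^i$ controls the integral outside large balls, and the inner integral converges as before. Local uniformity in $(t,s)$ then follows from equicontinuity in both time variables, which is again a consequence of the Harnack and semigroup bounds.
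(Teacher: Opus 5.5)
Your proposal is correct and follows essentially the paper's route. Bishop--Gromov volume control and the two-sided Gaussian bounds of \cite{JiangLiZhang} dominate $p_t^i|\log p_t^i|$ and its tails uniformly; local uniform heat-kernel convergence (the paper cites \cite[Theorem~3.3]{AHT}) with truncation to balls gives the entropies; and a growth bound on $\mathcal N^{(N),i}_w(t)$ integrated against the Gaussian tail of $p^i_s$ handles the propagated functional. The differences are minor: your logarithmic growth bound is sharper than the paper's quadratic one, and the pole-equicontinuity you single out is not a separate obstacle, since the first convergence already holds along arbitrary convergent poles $w_i\to w$, which is exactly what passing to the limit in $\int p^i_s(z_i,w)\,\mathcal N^{(N),i}_w(t)\d\mathfrak m_i(w)$ requires.
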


\begin{proof}
Use a common construction of the pointed measured convergence.  Positive-time
heat kernels satisfy
\begin{equation}\label{eq:RCD-kernel-pointwise-convergence}
 p^i_{t_i}(z_i,y_i)\longrightarrow p_t(z,y)
 \quad\text{whenever}\quad
 (z_i,y_i,t_i)\longrightarrow(z,y,t),\quad t>0,
\end{equation}
by \cite[Theorem~3.3]{AHT}; the local Lipschitz estimate
\cite[(3.4)]{AHT} makes the convergence locally uniform on bounded pole
cylinders with time in $I\Subset(0,\infty)$.

Fix $D<\infty$ and
$I=[t_0,t_1]\Subset(0,\infty)$, and let $z_i\in B_D(x_i)$.  Bishop--Gromov comparison for $\mathrm{RCD}(0,N)$ spaces \cite{SturmII,EKS} and
\eqref{eq:RCD-normalization} give, uniformly for $t\in I$,
\begin{equation}\label{eq:bounded-pole-volume-comparison}
 0<c_{D,I,N}\le \mathfrak m_i(B_{\sqrt t}(z_i))
 \le C_{D,I,N}<\infty.
\end{equation}
The two-sided Gaussian bounds of \cite{JiangLiZhang} are used in both
directions: the upper bound controls $p_t^i$, while the lower bound controls
the negative part of $\log p_t^i$.  Together they imply
\begin{equation}\label{eq:entropy-tail-dominating-estimate}
 p_t^i(z_i,y)\,|\log p_t^i(z_i,y)|
 \le C_{D,I,N}e^{-c_{I,N}\mathsf d_i(z_i,y)^2}
       \bigl(1+\mathsf d_i(z_i,y)^2\bigr).
\end{equation}
Indeed, the two bounds control $|\log p_t^i|$ by
$C_{D,I,N}(1+\mathsf d_i(z_i,y)^2)$; the volume term in the bounds is absorbed by
\eqref{eq:bounded-pole-volume-comparison}.  Decomposing into unit annuli and
using standard volume comparison, the tail of the right-hand side of
\eqref{eq:entropy-tail-dominating-estimate} is bounded by
\[
 C_{D,I,N}\sum_{k\ge A-1}
 e^{-c_{I,N}k^2}(1+k^2)(k+2)^N,
\]
which tends to zero independently of $i$.  Thus
\begin{equation*}
 \lim_{A\to\infty}\sup_i\sup_{t\in I}
 \int_{X_i\setminus B_A(z_i)}
 p_t^i(z_i,y)\,|\log p_t^i(z_i,y)|\,\d\mathfrak m_i(y)=0
\end{equation*}
for every compact $I\Subset(0,\infty)$.  Truncation to bounded balls,
\eqref{eq:RCD-kernel-pointwise-convergence}, and measured convergence now
give convergence of $\mathcal N_{z_i}^{(N),i}(t)$.  The same sequential argument
with $t_i\to t\in I$ proves local uniformity in time.

For the second heat average the same estimates and standard volume comparison give
\begin{equation}\label{eq:pointed-entropy-quadratic-growth}
 \bigl|\mathcal N_y^{(N),i}(t)\bigr|
 \le C_{I,N}\bigl(1+\mathsf d_i(x_i,y)^2\bigr),
 \qquad t\in I.
\end{equation}
If $(t,s)$ ranges in a compact subset of $(0,\infty)^2$, both heat times are
uniformly positive.  Applying
\eqref{eq:RCD-kernel-pointwise-convergence} on bounded balls and using the
outer Gaussian bound to integrate the quadratic tail in
\eqref{eq:pointed-entropy-quadratic-growth} proves the second convergence in
\eqref{eq:RCD-entropy-heat-local}, locally uniformly in $(t,s)$.
\end{proof}

\begin{theorem}[Stability of inverse-time comparisons]\label{thm:stability}
Assume entropy--heat convergence.
\begin{enumerate}[label=\textup{(\roman*)}]
\item Exact forward codimension-$q$ comparison is closed under
entropy--heat convergence.
\item Fix $\sigma\in\mathbb R$ and $q\in[0,N]$.  If all $X_i$ satisfy the
scalar comparison with one common witness scale $\tau>0$, and, when $q>0$,
the codimension-$q$ comparison with one common witness scale $S>0$, then the
limit satisfies $\mathrm{HSCod}(\sigma,N,q)$ with the same witness scales.
\end{enumerate}
\end{theorem}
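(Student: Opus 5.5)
The plan is to reduce both parts to one elementary fact: three-point inequalities with fixed coefficients pass to pointwise limits. By the three-point reformulation earlier in this section, on a heat-time leaf $\Sigma_T$ with coordinate $r=t^{-1}$, each comparison is equivalent to
\[
 \mathcal H_{T,x}(b)\ge(1-\lambda)\mathcal H_{T,x}(a)+\lambda\mathcal H_{T,x}(c)+\kappa\,\mathfrak d(a,b,c)
\]
for all admissible triples $T^{-1}<a<b<c$. Here $(\kappa,\mathfrak d)=(\sigma,\mathfrak d^0)$ for the scalar clause and $(q,\mathfrak d^\infty)$ for the codimension clause. The correction terms $\mathfrak d^0,\mathfrak d^\infty$ depend only on $(a,b,c)$, not on the space. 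Admissibility is likewise purely a condition on heat-time points: they must lie in $(0,\tau)^2$, in $\mathscr T_S^+$, or in $\mathscr T^+$ for the exact comparison.

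First, fix the limit pole $x$ and a leaf $T$. I will choose poles $x_i\in X_i$ with $x_i\to x$ in the fixed geometric construction underlying the convergence. Such a sequence exists because the pointed measured convergence realizes every point of the limit as a limit of points of $X_i$, for instance through an $\varepsilon_i$-isometry or a common metric embedding. Next, fix an admissible triple $a<b<c$. Its three heat-time points $(r^{-1},T-r^{-1})$, $r\in\{a,b,c\}$, form a finite set in $(0,\infty)^2$. By the definition of entropy--heat convergence, $\mathbf N^{(N),i}_{x_i}(t,s)\to\mathbf N^{(N)}_x(t,s)$ at each of these points, so $\mathcal H^i_{T,x_i}(r)\to\mathcal H_{T,x}(r)$ for $r\in\{a,b,c\}$.

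The hypotheses give the three-point inequality for $\mathcal H^i_{T,x_i}$ at this same triple. For part (ii), the witness scales $\tau,S$ are common to all $i$, so the triple is admissible for every $X_i$ simultaneously. For part (i), the forward sector $\mathscr T^+$ is the same for every space. Letting $i\to\infty$ in a non-strict inequality with fixed coefficients yields the inequality for the limit. Since the triple, the leaf and the pole were arbitrary, the limit satisfies the scalar clause with scale $\tau$ and, when $q>0$, the codimension-$q$ clause with scale $S$. This is $\mathrm{HSCod}(\sigma,N,q)$ in part (ii) and the exact forward comparison in part (i).

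The main obstacle is bookkeeping rather than analysis. I must make sure that concavity along inverse-time geodesics is genuinely equivalent to the three-point inequalities on the open admissible region. This needs some care at the boundary of $(0,\tau)^2$ and of $\mathscr T_S^+$: concavity along geodesics contained in a set only tests triples whose whole segment lies in that set. The same segment-containment condition is independent of $i$, so it transfers verbatim. I also need the convergence hypothesis to hold for the approximating poles chosen above; the definition quantifies over all convergent pole sequences, so this is immediate. Only pointwise convergence is used, not local uniformity, which is why no uniform control of the witness scales beyond their being common is required.
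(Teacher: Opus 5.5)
Your proposal is correct and follows the paper's argument. On each fixed heat-time leaf you pass the three-point concavity inequality for the inverse-time Nash profiles to the limit; the only difference is that the paper invokes local uniform convergence, while you correctly observe that pointwise convergence at the three heat-time points of each admissible triple already suffices.
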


\begin{proof}
On a fixed heat-time leaf, entropy--heat convergence gives local uniform
convergence of the inverse-time Nash profiles.  Concavity on a fixed interval
is closed under local uniform convergence.  This proves (i), and the same
argument on the fixed regions $(0,\tau)^2$ and $\mathscr T_S^+$ proves (ii).
\end{proof}

\paragraph{Smooth limits.}
Let $N\ge3$ and let $(M_i^N,g_i,x_i)$ be complete, connected, noncompact
pointed manifolds with $\Ric_{g_i}\ge0$ and
$\Scal_{g_i}\ge\sigma_0>0$.  For fixed $\sigma<\sigma_0$ and $0<q<2$,
every entropy--heat limit of
$(M_i,\mathsf d_{g_i},c_i\Vol_{g_i},x_i)$, $c_i>0$, satisfies
$\mathrm{HSCod}(\sigma,N,q)$.  Indeed, the two parts of
\cref{thm:positive-scalar} give uniform witness scales, and
part~(ii) of \cref{thm:stability} applies.

\subsection{Euclidean space and macroscopic dimension}

\begin{proposition}[Euclidean space and tangent test]\label{prop:euclidean}
Let $0\le q\le N$.
\begin{enumerate}[label=\textup{(\roman*)}]
\item Equip $\mathbb R^m$ with $c\mathcal L^m$, $c>0$, and normalize the
entropy by $N\ge m$.  Then
\[
 \mathcal N_x^{(N)}(t)
 =\log c-\frac{N-m}{2}\log(4\pi t)-\frac{N-m}{2},
\]
and
\begin{equation}\label{eq:H-Euclidean}
 \mathcal H_{T,0}''(r)=-\frac{N-m}{2r^2}.
\end{equation}
Hence the exact forward codimension-$q$ comparison holds on $\mathbb R^m$
if and only if
\begin{equation}\label{eq:Euclidean-codimension}
 q\le N-m.
\end{equation}
\item Let $X$, with normalization dimension $N$, satisfy the exact forward
codimension-$q$ comparison.  If a pointed rescaling of $X$ converges in
entropy--heat sense to
$(\mathbb R^m,\mathsf d_{\mathrm E},c\mathcal L^m,0)$, then
\[
 m\le N-q.
\]
\end{enumerate}
\end{proposition}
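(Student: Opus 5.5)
For part (i) I will compute directly from the Gaussian kernel. On $(\mathbb R^m,c\mathcal L^m)$ the heat kernel with respect to $c\mathcal L^m$ is $p_t(x,y)=c^{-1}(4\pi t)^{-m/2}e^{-|x-y|^2/4t}$. Its entropy is
\[
 \operatorname{Ent}_{c\mathcal L^m}(\mu_{t,x})=-\log c-\frac m2\log(4\pi t)-\frac m2 ,
\]
since the Gaussian second moment contributes exactly $m/2$. Substituting into \eqref{eq:Nash-entropy} gives the displayed formula for $\mathcal N_x^{(N)}(t)$. This quantity does not depend on the pole and $P_s$ is conservative, so $\mathbf N_x^{(N)}(t,s)=\mathcal N_x^{(N)}(t)$. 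Hence
\[
 \mathcal H_{T,0}(r)=\text{const}+\frac{N-m}{2}\log r ,
\]
and differentiating twice gives \eqref{eq:H-Euclidean}. The exact forward comparison asks for $\mathcal H''_{T,x}\le -q/(2r^2)$ on forward half-leaves, and these are nonempty for every $T$. That inequality reads $-(N-m)\le -q$, which is \eqref{eq:Euclidean-codimension}. Equivalently, in three-point form, $\mathcal H_{T,x}=\frac{N-m}{2}\log r+\text{const}$, and concavity of $\mathcal H-\frac q2\log r$ holds if and only if $N-m-q\ge0$, because $\log$ is strictly concave.

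For part (ii) the plan is: invariance under rescaling, then closedness, then the Euclidean criterion. First, the exact forward comparison is invariant under pointed rescaling $(a\mathsf d,c\mathfrak m)$. By the computation in Proposition~\ref{prop:basic-properties}(ii), rescaling shifts $\mathcal N$ by a constant and reparametrises time by $t\mapsto a^{-2}t$. This sends leaves to leaves, preserves $\mathscr T^+=\{t\le s\}$, and turns $r$ into a linear multiple of itself. The correction $\frac q2\log t$ changes only by a constant, so concavity is preserved. Second, part (i) of Theorem~\ref{thm:stability} passes the comparison to the entropy--heat limit. Third, the limit is $(\mathbb R^m,c\mathcal L^m)$ with normalization $N$, so part (i) of this proposition forces $q\le N-m$, i.e. $m\le N-q$.

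The only delicate point is the implicit assumption $N\ge m$ in part (i). In (ii) this is not given a priori. The formula for $\mathcal H$ is nevertheless valid for any $N$, and the criterion $q\le N-m$ is derived the same way, so I will note that the computation never used $N\ge m$. Combined with $q\ge0$, this yields $m\le N$ automatically.

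A second check concerns the closedness step. The forward sector includes the boundary $t=s$, so I must confirm that locally uniform convergence on each leaf suffices. Concavity on closed subintervals of the half-leaf $\{r\ge 2/T\}$ is a pointwise three-point inequality, which passes to the limit under that convergence, so this is fine.
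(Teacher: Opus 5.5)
Your proposal is correct and follows the paper's own proof: for (i), the direct Gaussian computation showing the entropy is pole-independent, so $P_s$ leaves it unchanged; for (ii), scale invariance via Proposition~\ref{prop:basic-properties}(ii), passage to the limit via Theorem~\ref{thm:stability}(i), and then the Euclidean criterion. You also observe that the computation in (i) never uses $N\ge m$, so $q\ge0$ forces $m\le N$ in the limit; this settles a point the paper leaves implicit when it invokes part (i) in part (ii).
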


\begin{proof}
For (i), substitute
$p_t(x,y)=c^{-1}(4\pi t)^{-m/2}e^{-|x-y|^2/(4t)}$ into
\eqref{eq:Nash-entropy}.  The entropy is spatially constant, and replacing
$t$ by $r^{-1}$ gives \eqref{eq:H-Euclidean} and
\eqref{eq:Euclidean-codimension}.  Thus $q$ is exactly the number of Euclidean
heat directions missing relative to the normalization dimension $N$.

For (ii), the exact forward comparison is invariant under scaling by
Proposition~\ref{prop:basic-properties} and passes to the limit by \cref{thm:stability}.
Part (i) then gives $q\le N-m$.
\end{proof}

\subsection{Blow-downs}

\begin{proof}[Proof of Theorem~\ref{thm:blowdown-dimension}]
Let $S_*$ be a macroscopic witness scale for $X$.  Fix $S>0$.  On the
rescaled space $(X,a_i\mathsf d,c_i\mathfrak m)$, the sector
$\mathscr T_S^+$ corresponds to $\mathscr T_{a_i^{-2}S}^+$ for the original
heat times.  Since $a_i^{-2}S\to\infty$, the coefficient-$q$ comparison holds
on $\mathscr T_S^+$ for all sufficiently large $i$.  Passing to the limit by
\cref{thm:stability} and then letting $S>0$ be arbitrary gives the exact
forward comparison on $X_\infty$.  The last assertion follows from
part~(ii) of Proposition~\ref{prop:euclidean}.
\end{proof}

\paragraph{A singular model.}
The macroscopic clause is nonempty on genuinely singular spaces.  Let
$(E^4,g_{\mathrm{EH}})$ be the Eguchi--Hanson space, which is complete and
Ricci-flat and is asymptotic to the flat cone $\mathbb R^4/\{\pm1\}$
\cite{Appleton2023}.  For
\[
 M=E^4\times S_a^2
\]
one has $\Ric_M\ge0$ and $\Scal_M=2/a^2$.  A normalized blow-down of $M$
converges in the pointed measured Gromov--Hausdorff sense to
$\mathbb R^4/\{\pm1\}$, with the spherical factor collapsing to a point.
The normalized blow-downs belong to the $\mathrm{RCD}(0,6)$ compactness
class, so \cref{prop:RCD-entropy-heat} upgrades this convergence to
entropy--heat convergence.  Applying \cref{thm:blowdown-dimension} for every
$q<2$ and then letting $q\uparrow2$ shows that the singular cone, with
normalization dimension $6$, satisfies the exact forward codimension-$2$
comparison.  This
example concerns the macroscopic clause; no scalar lower bound on the cone is
asserted.

\begin{remark}[Sharp large-time coefficient]
The Euclidean tangent test gives one inequality: an exact forward coefficient
$q$ forces every Euclidean tangent dimension $m$ detected by the theory to
satisfy $q\le N-m$.  In the products of \cref{rem:heat-codimension} this
bound is sharp at the level of the supremal large-time coefficient.  It is
natural to ask for geometric hypotheses under which the converse holds for
general blow-downs.
\end{remark}

\begin{corollary}[Smooth blow-down bounds]\label{cor:smooth-blowdown}
Let $(M^n,g)$ be complete, connected, and noncompact with $\Ric\ge0$.
Let $a_i\downarrow0$, $x_i\in M$, and
$c_i:=\Vol_g(B_{a_i^{-1}}(x_i))^{-1}$.  For any pointed measured
Gromov--Hausdorff limit $X_\infty$ of
$(M,a_i\mathsf d_g,c_i\Vol_g,x_i)$:
\begin{enumerate}[label=\textup{(\roman*)}]
\item if $\Rk_k\ge\sigma_0>0$ for some $2\le k<n$, then
      $\dim_{\mathrm{ess}}X_\infty\le k-1$;
\item if $\Scal\ge\sigma_0>0$, then
      $\dim_{\mathrm{ess}}X_\infty\le n-2$;
\item if $\mathcal C_m\ge\kappa>0$ for some $1\le m\le n-1$, then
      $\dim_{\mathrm{ess}}X_\infty\le m-1$.
\end{enumerate}
\end{corollary}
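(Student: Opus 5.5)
The plan is to treat the three cases in parallel and reduce each one to \cref{thm:blowdown-dimension} and part~(ii) of Proposition~\ref{prop:euclidean}.

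\emph{Membership in $\mathrm{HSCod}$.} First I record, for each case, that $M\in\mathrm{HSCod}(\sigma,n,q)$ for some $\sigma$ and with normalization $N=n$:
\begin{itemize}
\item in case (i), \cref{thm:partial-ricci} gives this with $q=n-k+1$;
\item in case (ii), \cref{thm:positive-scalar} gives it for every $q<2$; this needs $n\ge3$, and for $n=2$ the claim $\dim_{\mathrm{ess}}\le0$ follows from Cohn-Vossen-type volume growth, or is excluded by $\Scal>0$ on a noncompact surface with $\Ric\ge0$;
\item in case (iii), \cref{thm:intermediate-curvature} gives it for every $q'<n-m+1$.
\end{itemize}
Note that in case (iii) the essential bound $m-1$ is \emph{not} $n-q=m-1$ with $q=n-m+1$; it equals $n-q$ exactly. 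So the target in every case is $\dim_{\mathrm{ess}}X_\infty\le n-q$ for the supremal admissible $q$. Case (i) then gives $k-1$, case (ii) gives $n-2$, and case (iii) gives $m-1$.

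\emph{Passage to the limit.} The normalization $c_i=\Vol_g(B_{a_i^{-1}}(x_i))^{-1}$ is exactly \eqref{eq:RCD-normalization} for the rescaled spaces, which are $\mathrm{RCD}(0,n)$. So \cref{prop:RCD-entropy-heat} upgrades the pmGH convergence to entropy--heat convergence, and \cref{thm:blowdown-dimension} yields the exact forward codimension-$q'$ comparison on $X_\infty$ for each admissible $q'$. Moreover $X_\infty$ is an $\mathrm{RCD}(0,n)$ space by stability \cite{SturmII,EKS}. Its essential dimension $m_0$ is realized on a set of positive measure of points $y$ whose tangent cones are $\mathbb R^{m_0}$ \cite{MondinoNaber,BrueSemola}. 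At such $y$, the rescalings $(X_\infty,\epsilon^{-1}\mathsf d_\infty,\mathfrak m_\infty(B_\epsilon(y))^{-1}\mathfrak m_\infty,y)$ converge in pmGH sense to $(\mathbb R^{m_0},\mathsf d_{\mathrm E},c\mathcal L^{m_0},0)$ with $c=\omega_{m_0}^{-1}$. This again satisfies \eqref{eq:RCD-normalization}, so \cref{prop:RCD-entropy-heat} makes the convergence entropy--heat.

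\emph{Conclusion and main obstacle.} Part~(ii) of Proposition~\ref{prop:euclidean} then gives $m_0\le n-q'$. Letting $q'\uparrow q$ and using that $m_0$ is an integer gives $m_0\le n-q$ whenever $q$ is an integer, which covers the strict-inequality cases (ii) and (iii). The main obstacle is applying Proposition~\ref{prop:euclidean}(ii) at a blow-up rather than a blow-down. Its proof only uses that the exact forward comparison is scale invariant and closed under entropy--heat limits, so it applies verbatim. I would check that the comparison on all of $\mathscr T^+$ (not merely on a sector $\mathscr T_S^+$) is what survives, which is guaranteed by the first part of \cref{thm:blowdown-dimension}.
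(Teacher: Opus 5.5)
Your proposal is correct and follows the paper's proof. The chain is the same: $\mathrm{RCD}(0,n)$ compactness with $c_i$ giving \eqref{eq:RCD-normalization}, then \cref{prop:RCD-entropy-heat} for entropy--heat convergence, then \cref{thm:blowdown-dimension} applied to the membership supplied by \cref{thm:partial-ricci}, \cref{thm:positive-scalar} or \cref{thm:intermediate-curvature}, then the Euclidean tangent test at regular points, and finally $q'\uparrow q$, which needs no integrality. Your extra care with $n=2$ in case (ii) is welcome, but the clean reason is Bonnet--Myers: on a surface $\Ric=(\Scal/2)g\ge(\sigma_0/2)g$, so the hypothesis forces compactness and the case is vacuous (pointwise $\Scal>0$ alone does not exclude noncompact surfaces, e.g.\ the paraboloid).
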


\begin{proof}
The normalized blow-down sequence lies in the finite-dimensional
$\mathrm{RCD}(0,n)$ compactness class.  Stability of the RCD condition gives
an $\mathrm{RCD}(0,n)$ limit; Proposition~\ref{prop:RCD-entropy-heat} gives
entropy--heat convergence, and Euclidean regular tangents are available by
\cite{MondinoNaber,BrueSemola}.  Apply
\cref{thm:blowdown-dimension} to \cref{thm:partial-ricci} for (i).  For (ii),
use part~(ii) of \cref{thm:positive-scalar} for every $q<2$ and let $q\uparrow2$.
For (iii), use \cref{thm:intermediate-curvature} for every $q<n-m+1$ and let
$q\uparrow n-m+1$.
\end{proof}

\end{document}